\newtheorem{theorem}{Theorem}[section]
\newtheorem{lemma}[theorem]{Lemma}  
\newtheorem{prop}[theorem]{Proposition} 
\newtheorem{coro}[theorem]{Corollary}  
\newcommand{\Xcomment}[1]{}
\makeatletter \@addtoreset{equation}{section} \makeatother
\newenvironment{proof}{\noindent{\bf Proof}\/}%
{\hfill$\qed$\medskip}
\def\qed{ \ \vrule width.1cm height.3cm depth0cm}
\newenvironment{numitem1}{\refstepcounter{equation}\begin{enumerate}%
\item[(\thesection.\arabic{equation})]}{\end{enumerate}}
\newcommand{\refeq}[1]{(\ref{eq:#1})}  
\renewcommand{\section}{\@startsection{section}{1}{0pt}%
{-3.5ex plus -1ex minus -.2ex}{2.3ex plus .2ex}%
{\normalfont\Large}}
\renewcommand{\subsection}{\@startsection{subsection}{2}{0pt}%
{-3.0ex plus -1ex minus -.2ex}{-1.5ex plus .2ex}%
{\normalfont\normalsize\bf}}
\newcommand{\SEC}[1]{\ref{sec:#1}}  
\def\Rset{{\mathbb R}}
\def\Zset{{\mathbb Z}}
\def\Ascr{{\cal A}}
\def\Dscr{{\cal D}}
\def\Escr{{\cal E}}
\def\Lscr{{\cal L}}
\def\Pscr{{\cal P}}
\def\Qscr{{\cal Q}}
\def\tilde{\widetilde}
\def\Path{{\rm Path}}
\def\Sign{{\rm sign}}
\begin{document}

\baselineskip=15pt
\parskip=2pt

\title{Basic quadratic identities on quantum minors
}

\author{
Vladimir~I.~Danilov\thanks{Central Institute of Economics and Mathematics of
RAS, 47, Nakhimovskii Prospect, 117418 Moscow, Russia; email:
danilov@cemi.rssi.ru}
  \and
Alexander~V.~Karzanov\thanks{Central Institute of Economics and Mathematics of
RAS, 47, Nakhimovskii Prospect, 117418 Moscow, Russia; email: akarzanov7@gmail.com. Corresponding author.
}
  }

\date{}

\maketitle

 \begin{abstract} This paper continues an earlier research of the authors on universal quadratic identities (QIs) on minors of quantum matrices. We demonstrate  situations when the universal QIs are provided, in a sense, by the ones of four special types
(Pl\H{u}cker, co-Pl\H{u}cker, Dodgson identities and quasi-commutation relations on flag and co-flag interval minors). 
  \medskip

{\em Keywords}\,: quantum matrix, Pl\H{u}cker and Dodgson relations,
quasi-commuting minors, Cauchon graph, path matrix
 \smallskip

\emph{MSC-class}: 16T99, 05C75, 05E99
 \end{abstract}


\section{Introduction}  \label{sec:intr}

Let $\Ascr$ be a $\mathbb K$-algebra over a field $\mathbb K$ and let $q\in \mathbb K^\ast$. We deal with an $m\times n$ matrix $X$ whose entries $x_{ij}$
belong to $\Ascr$ and satisfy
the following ``quasi-commutation'' relations (originally appeared
in Manin's work~\cite{man}): for $i<\ell\le m$ and $j<k\le n$,
   \begin{gather}
   x_{ij}x_{ik}=qx_{ik}x_{ij},\qquad x_{ij}x_{\ell j}=qx_{\ell j}x_{ij},
                                              \label{eq:xijkl}\\
   x_{ik}x_{\ell j}=x_{ \ell j}x_{ik}\quad \mbox{and}\quad
    x_{ij}x_{\ell k}-x_{\ell k}x_{ij}=(q-q^{-1})x_{ik}x_{\ell j}.  \nonumber
    \end{gather}

We call such an $X$ a \emph{fine $q$-matrix} over $\Ascr$ and are interested in relations in the corresponding \emph{quantized coordinate ring} (the
algebra of polynomials in the $x_{ij}$ respecting the relations in $\Ascr$), which are viewed as quadratic identities on $q$-\emph{minors} of $X$. Let us start with some terminology and notation. 
 \smallskip

$\bullet$ ~For a positive integer $n'$, the set $\{1,2,\ldots,n'\}$ is denoted
by $[n']$. Let $\Escr^{n,m}$ denote the set of ordered pairs $(I,J)$ such that
$I\subseteq[m]$, ~$J\subseteq[n]$ and $|I|=|J|$; we will refer to such a pair as a \emph{cortege} and may denote it as $(I|J)$. The submatrix of $X$ whose rows and columns are indexed by elements of $I$ and $J$, respectively, is denoted by $X(I|J)$. For
$(I,J)\in\Escr^{m,n}$, the $q$-\emph{determinant} (called the
$q$-\emph{minor}, the \emph{quantum minor}) of $X(I|J)$ is defined as 
  \begin{equation} \label{eq:qminor}
  \Delta_{X,q}(I|J):=\sum\nolimits_{\sigma\in S_k} (-q)^{\ell(\sigma)}
    \prod\nolimits_{d=1}^{k} x_{i_dj_{\sigma(d)}},
    \end{equation}
where the factors  in $\prod$ are ordered from left to right by increasing $d$, and
$\ell(\sigma)$ denotes the \emph{length} (number of inversions) of
a permutation $\sigma$. The terms $X$ and/or $q$ in $\Delta_{X,q}(I|J)$ may
be omitted when they are clear from the context. By definition $\Delta(\emptyset|\emptyset)$ is the unit of $\Ascr$.
  \smallskip

$\bullet$ ~A quantum quadratic identity (QI) of our interest is viewed as
  \begin{equation} \label{eq:q_ident}
  \sum(\Sign_i q^{\delta_i} \Delta_q(I_i|J_i)\, \Delta_q(I'_i|J'_i) \colon i=1,\ldots,N)=0,
  \end{equation}
where for each $i$, ~$\delta_i\in\Zset$, ~$\Sign_i\in\{+,-\}$, and $(I_i|J_i),\, (I'_i|J'_i)\in\Escr^{m,n}$. Note that any pair $(I|J),(I'|J')$ may be repeated in~\refeq{q_ident} many times. We restrict ourselves by merely \emph{homogeneous} QIs, which means that in expression~\refeq{q_ident},
  \begin{numitem1} \label{eq:homogen}
each of the sets $I_i\cup I'_i,\, I_i\cap I'_i,\, J_i\cup J'_i,\, J_i\cap J'_i$ is
invariant of $i$.
  \end{numitem1}
When, in addition, \refeq{q_ident} is valid for all appropriate $\Ascr,q,X$ (with $m,n$ fixed), we say that~\refeq{q_ident} is \emph{universal}. 
\smallskip

In fact, there are plenty of universal QIs. For example, representative classes 
involving quantum flag minors were demonstrated by Lakshmibai and Reshetikhin~\cite{LR} and Taft and Towber~\cite{TT}. Extending earlier results, the authors obtained in~\cite{DK} necessary and sufficient conditions characterizing all universal QIs. These conditions are given in combinatorial terms and admit an efficient verification.

Four special cases of universal QIs play a central role in this paper. They are exposed in (I)--(IV) below; for details, see~\cite[Sects.~6,8]{DK}. 

In what follows, for integers $1\le a\le b\le n'$, we call the set $\{a,a+1,\ldots,b\}$ an \emph{interval} in $[n']$ and denote it as $[a..b]$ (in particular, $[1..n']=[n']$). For disjoint subsets $A$ and $\{a,\ldots,b\}$, we may abbreviate $A\cup\{a,\ldots,b\}$ as $Aa\ldots b$.  Also for $(I|J)\in\Escr^{m,n}$, ~$\Delta(I|J)=\Delta_{X,q}(I|J)$ is called a \emph{flag} (\emph{co-flag}) $q$-minor if $J=[k]$ (resp. $I=[k]$), where $k:=|I|=|J|$. 
 \smallskip
 
(I) \emph{Pl\H{u}cker-type relations with triples}.  ~Let $A\subset[m]$, $B\subset[n]$, $\{i,j,k\}\subseteq[m]-A$, $\ell\in[n]-B$, and let $|A|+1=|B|$ and $i<j<k$. There are several universal QIs on such elements (see a discussion in~\cite[Sect.~6.4]{DK}). One of them is viewed as
  \begin{equation} \label{eq:Pluck}
  \Delta(Aj|B)\Delta(Aik|B\ell)=\Delta(Aij|B\ell)\Delta(Ak|B)+\Delta(Ajk|B\ell)\Delta(Ai|B).
  \end{equation}
In the flag case (when $B=[|B|]$ and $\ell=|B|+1$) this turns into a quantum analog of the classical Pl\H{u}cker relation with a triple $i<j<k$.
 \smallskip
 
(II) \emph{Co-Pl\H{u}cker-type relations with triples}.  ~They are ``symmetric'' to those in~(I). Namely, we deal with $A\subset[m]$, $B\subset[n]$, $\ell\in[m]-A$ and $\{i,j,k\}\subseteq[n]-B$ such that $|A|=|B|+1$ and $i<j<k$. Then there holds:
  \begin{equation} \label{eq:co-Pluck}
  \Delta(A|Bj)\Delta(A\ell|Bik)=\Delta(A\ell|Bij)\Delta(A|Bk)+\Delta(A\ell|Bjk)\Delta(A|Bi).
  \end{equation}

 (III) \emph{Dodgson-type relations}. ~Let $i,k\in[m]$ and $j,\ell\in[n]$ satisfy $k-i=\ell-j\ge 0$. Form the intervals $A:=[i+1..k-1]$ and $B:=[j+1..\ell-1]$. The universal QI which is a quantum analog of the classical Dodgson relation is viewed as (cf.~\cite[Sect.~6.5]{DK})
  \begin{equation} \label{eq:Dodgson}
 \Delta(Ai|Bj)\Delta(Ak|B\ell)  =  \Delta(Aik|Bj\ell)\Delta(A|B)+ q\Delta(Ai|B\ell)\Delta(Ak|Bj).
  \end{equation}
  
In particular, when $A=B=\emptyset$, we obtain the expression 
$\Delta(ik|j\ell)=\Delta(i|j)\Delta(k|\ell)-q\Delta(i|\ell)\Delta(k|j)$ (with $k=i+1$ and $\ell=j+1$), taking into account that $\Delta(\emptyset|\emptyset)=1$. This matches formula~\refeq{qminor} for the $q$-minor of a $2\times 2$ submatrix.
 
 (IV) \emph{Quasi-commutation relations on interval $q$-minors}. ~The simplest possible kind of universal QIs involves two corteges $(I|J),(I'|J')\in\Escr^{m,n}$ and is viewed as 
  \begin{equation} \label{eq:quasi-commut}
  \Delta(I|J)\Delta(I'|J')=q^c\Delta(I'|J')\Delta(I|J)
  \end{equation}
for some $c\in\Zset$. When $q$-minors $\Delta(I|J)$ and $\Delta(I'|J')$ satisfy~\refeq{quasi-commut}, they are called \emph{quasi-commuting}. (For example, three relations in~\refeq{xijkl} are such.) Leclerc and Zelevinsky~\cite{LZ} characterized such minors in the \emph{flag} case, by showing that $\Delta(I|[|I|])$ and $\Delta(I'|[|I'|])$ quasi-commute if and only if the subsets $I,I'$ of $[m]$ are \emph{weakly separated} (for a definition, see~\cite{LZ}). In a general case, a characterization of quasi-commuting $q$-minors is given in Scott~\cite{scott} (see also~\cite[Sect.~8.3]{DK} for additional aspects).

For purposes of this paper, it suffices to consider only \emph{interval $q$-minors}, i.e., assume that all $I,J,I',J'$ are intervals. Let for definiteness $|I|\ge|I'|$ and define 
  \begin{gather} 
  \alpha:=|\{i'\in I'\colon i'<\min(I)\}|,\qquad \beta:=|\{i'\in I'\colon i'>\max(I)\}|,
                                                                                            \label{eq:alphabeta} \\
  \gamma:=|\{j'\in J'\colon j'<\min(J)\}|,\qquad \delta:=|\{j'\in J'\colon j'>\max(J)\}|. \nonumber
   \end{gather}
Then the facts that $I,J,I',J'$ are intervals and that $|I|\ge|I'|$ imply $\alpha\beta=\gamma\delta =0$.  

Specializing Proposition~8.2 from~\cite{DK} to our case, we obtain that
  \begin{numitem1} \label{eq:qcom-int}
  for $|I|\ge |I'|$, interval $q$-minors $\Delta(I|J)$ and $\Delta(I'|J')$ quasi-commute (universally) if and only if $\alpha\gamma=\beta\delta=0$; in this case, $c$ as in~\refeq{quasi-commut} is equal to $\beta+\delta-\alpha-\gamma$.
    \end{numitem1}
     
In fact, we will use~\refeq{qcom-int} only  when $\Delta(I|J)$ is a flag or co-flag interval $q$-minor, and similarly for $\Delta(I'|J')$ (including mixed cases with one flag and one co-flag $q$-minors).

In this paper we explore the issue when the special quadratic identities exhibited in~(I)--(IV) determine all other universal QIs. More precisely, let $\Pscr=\Pscr_{m,n}$, $\Pscr^\ast=\Pscr^\ast_{m,n}$, and $\Dscr=\Dscr_{m,n}$ denote the sets of relations as in~\refeq{Pluck}, \refeq{co-Pluck}, and~\refeq{Dodgson}, respectively (concerning the corresponding objects in~(I)--(III)). Also let $\Qscr=\Qscr_{m,n}$ denote the set of quasi-commuting relations in (IV) concerning the flag and co-flag interval cases.
 \smallskip
 
\noindent\textbf{Definitions.} ~For $\Ascr,\,q,m,n$ as above, $f:\Escr^{m,n}\to \Ascr$ is called a \emph{QI-function} if its values satisfy the quadratic relations similar to those in the universal QIs on $q$-minors (i.e., when we formally replace $\Delta(I|J)$ by $f(I|J)$ in these relations). When  $f:\Escr^{m,n}\to \Ascr$ is assumed to satisfy the relations as in $\Pscr$, $\Pscr^\ast$ and $\Dscr$, we say that $f$ is an \emph{RQI-function} (abbreviating ``a function obeying \emph{restricted quadratic identities}'').
 \smallskip
 
Note that if $f:\Escr^{m,n}\to\Ascr$ satisfies a quadratic relation Q, and $a$ is an element of the center of $\Ascr$ (i.e. $ax=xa$ for any $x\in\Ascr$), then $af$ satisfies $Q$ as well. Hence if $f$ is a QI- or RQI-function, then so is $af$. Due to this, in what follows we will default assume that any function $f$ on $\Escr^{m,n}$ we deal with is \emph{normalized} , i.e., satisfies $f(\emptyset|\emptyset)=1$ (which is consistent with $\Delta(\emptyset|\emptyset)=1$).  
 
 Our goal is to prove two results on QI-functions. Let us say that a cortege $(I|J)\in\Escr^{m,n}$ is a \emph{double interval} if both $I,J$ are intervals. A double interval $(I|J)$ is called \emph{pressed} if at least one of $I,J$ is an initial interval, i.e., either $I=[|I|]$ or $J=[|J|]$ or both (yielding a flag or co-flag case); the set of these is denoted as $Pint=Pint_{m,n}$. 
 
 \begin{theorem} \label{tm:A}
Let RQI-functions $f,g:\Escr^{m,n}\to\Ascr-\{0\}$ coincide on $Pint_{m,n}$. Let, in addition, 
for any $(I|J)\in\Escr^{m,n}$, the element $f(I|J)$ is not a zerodivisor in $\Ascr$.
Then $f$ and $g$ coincide on the entire $\Escr^{m,n}$.
  \end{theorem}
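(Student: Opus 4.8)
\medskip
\noindent
The plan is to bootstrap from $Pint_{m,n}$ to all of $\Escr^{m,n}$ in two stages, each driven by one family of the restricted identities. In both stages, for the cortege $(I|J)$ under consideration one singles out an instance of a restricted identity that already has the shape
\[
 \Delta(I_0|J_0)\,\Delta(I|J)=\sum\nolimits_{r}\Bigl(\pm\,q^{e_r}\,\Delta(I_r|J_r)\,\Delta(I'_r|J'_r)\Bigr),
\]
where $(I_0|J_0)$ and all of $(I_r|J_r),(I'_r|J'_r)$ are strictly simpler than $(I|J)$ with respect to a suitable well-founded complexity measure. By induction on that measure, $f$ and $g$ agree on $(I_0|J_0)$ and on every cortege occurring on the right-hand side; since $f$ and $g$ satisfy the same identity, their right-hand sides coincide, so $f(I_0|J_0)\bigl(f(I|J)-g(I|J)\bigr)=0$, and since $f(I_0|J_0)$ is a value of $f$ it is not a zerodivisor, whence $f(I|J)=g(I|J)$. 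The recursion bottoms out at $(\emptyset|\emptyset)$, where $f=g=1$ by normalization.

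\emph{First stage: $f=g$ on all double intervals.} Here I use only the Dodgson identities~\refeq{Dodgson}. For a double interval $(I|J)$ put $\mu(I|J):=\min(\min I,\min J)-1$, so that $\mu=0$ holds exactly on $Pint_{m,n}$; induct on $\mu$, refined lexicographically by $(|I|,\max(\min I,\min J))$. The base $\mu=0$ is the hypothesis. For $\mu\ge1$ one has $\min I,\min J\ge2$, and I apply~\refeq{Dodgson} to the ``enveloping'' intervals $[\min I-1\,..\,\max I]$ and $[\min J-1\,..\,\max J]$. The resulting six-minor relation has, as the left factor of its left-hand product, the minor $\Delta([\min I-1\,..\,\max I-1]\,|\,[\min J-1\,..\,\max J-1])=:\Delta(I_0|J_0)$ and, as the right factor, $\Delta(I|J)$ itself; a direct inspection shows that $(I_0|J_0)$ and the larger minor $\Delta([\min I-1\,..\,\max I]\,|\,[\min J-1\,..\,\max J])$ have $\mu$-value $\mu-1$, while each of the three remaining minors has either $\mu$-value $<\mu$, or $\mu$-value $\mu$ together with strictly smaller $(|I|,\max(\min I,\min J))$. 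The point worth stressing is that the Dodgson family alone already reaches all double intervals: within a fixed level $\mu$ the secondary parameter performs a ``staircase'' sweep --- for $\mu=1$, say, producing the entry $x_{22}$, then $x_{23}$ and $x_{32}$, then $x_{33}$, and so on --- which is precisely why the two-parameter induction is needed.

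\emph{Second stage: $f=g$ on all of $\Escr^{m,n}$.} Now I use the Pl\"ucker and co-Pl\"ucker identities~\refeq{Pluck} and~\refeq{co-Pluck}. For $(I|J)\in\Escr^{m,n}$ let the \emph{defect} be $(\max I-\min I+1-|I|)+(\max J-\min J+1-|J|)$; it vanishes exactly on double intervals. Induct on $(\text{defect},|I|)$ ordered lexicographically, the base (defect $0$) being the first stage. If $I$ is not an interval (so $|I|=|J|\ge2$, since a singleton cortege is a double interval), choose $i:=\min I$, $k:=\max I$, an element $j$ with $i<j<k$ and $j\notin I$, $\ell:=\max J$, $A:=I\setminus\{i,k\}$, $B:=J\setminus\{\ell\}$; then the hypotheses of~\refeq{Pluck} are satisfied and that instance reads $\Delta(Aj|B)\,\Delta(I|J)=\Delta(Aij|B\ell)\,\Delta(Ak|B)+\Delta(Ajk|B\ell)\,\Delta(Ai|B)$. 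Replacing the two extreme elements $\min I,\max I$ of $I$ by a single interior element $j$ strictly shortens the row-span, and deleting $\max J$ from $J$ cannot lengthen the column-span; consequently $\Delta(Aj|B),\Delta(Aij|B\ell),\Delta(Ajk|B\ell)$ all have strictly smaller defect, whereas $\Delta(Ak|B)$ and $\Delta(Ai|B)$ have defect no larger and size $|I|-1$. Thus every cortege other than $(I|J)$ in this relation is lexicographically smaller. If instead $I$ is an interval but $J$ is not, one runs the mirror argument with~\refeq{co-Pluck}, taking $i,k$ to be the extremes of $J$, $j$ an interior non-element of $J$, and $\ell$ an extreme of $I$. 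This completes the induction, hence the theorem.

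The step I expect to be the main obstacle is the joint design of these inductions and the choice of identity instances: one must guarantee, for every instance used, that (i)~the target $\Delta(I|J)$ is the unique minor of maximal complexity, (ii)~the minor multiplying it on the left is of strictly smaller complexity --- hence already handled and, being a value of $f$, automatically not a zerodivisor --- and (iii)~all remaining minors are of strictly smaller complexity as well. The attendant span/gap estimates, together with the routing of the degenerate configurations ($|I|\le1$; $I$ or $J$ already an interval; $J=\emptyset$; $\ell$ at an end of its index set), are then routine, and the non-zerodivisor hypothesis on $f$ is used solely for the single cancellation in each inductive step.
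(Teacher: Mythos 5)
Your proposal is correct and follows essentially the same route as the paper's proof: Pl\"ucker/co-Pl\"ucker identities (with the extremes of the non-interval index set and an interior gap element) reduce general corteges to double intervals, Dodgson identities with the enveloping intervals reduce non-pressed double intervals to $Pint_{m,n}$, and the non-zerodivisor hypothesis cancels the left factor at each step. The only differences are the bookkeeping choices of complexity measures (the paper uses the span $\max(I)-\min(I)+\max(J)-\min(J)$ and the sum $\max(I)+\min(I)+\max(J)+\min(J)$, which avoid your lexicographic refinements), and these are immaterial.
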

  
It follows that any QI-function is uniquely determined by its values on $Pint$ and relations as in $\Pscr$, $\Pscr^\ast$ and $\Dscr$.

The second theorem describes a situation when taking values on $Pint$ arbitrarily within a representative part of $\Ascr$, one can extend these values to a QI-function (so one may say that, $Pint$ plays a role of ``basis'' for QI-functions, in a sense).

 \begin{theorem} \label{tm:B} Let $f_0:Pint\to\Ascr^\ast$ (where $\Ascr^\ast$ is the set of invertible elements of $\Ascr$). Suppose that $f_0$ satisfies the quasi-commutation relations (as in~\refeq{quasi-commut} in (IV)) on $Pint$. Then $f_0$ is extendable to a QI-function $f$ on $\Escr^{m,n}$.
  \end{theorem}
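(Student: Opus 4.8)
The strategy is to \emph{realize} the boundary data $f_0$ as the family of pressed $q$-minors of a single concrete fine $q$-matrix; once this is done, universality hands us every QI at once. Precisely, it suffices to build a fine $q$-matrix $X=(x_{ij})$ with entries in $\Ascr$ for which $\Delta_{X,q}(I|J)=f_0(I|J)$ holds for all $(I|J)\in Pint$. Given such an $X$, put $f(I|J):=\Delta_{X,q}(I|J)$ on the whole of $\Escr^{m,n}$. Because every identity in $\Pscr,\Pscr^\ast,\Dscr,\Qscr$ — and, more to the point, every universal QI characterized in~\cite{DK} — holds, by definition of universality, for the $q$-minors of an arbitrary fine $q$-matrix over an arbitrary $\Kset$-algebra, the function $f$ is automatically a QI-function, and it extends $f_0$ by construction. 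So Theorem~\ref{tm:B} is reduced to producing $X$, with no well-definedness check for $f$ required.

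To produce $X$ I would use a \emph{path matrix} over the Cauchon grid: a planar acyclic network $\Gamma$ on the $m\times n$ lattice with $m$ sources on one border and $n$ sinks on the opposite borders, whose number of edge parameters is $mn$ — matching the count $|Pint|=mn$ that one gets by splitting $Pint$ into its flag, co-flag and doubly-initial corteges. The entry $x_{ij}$ is declared to be the weight-sum of directed paths from source $i$ to sink $j$. The point is to choose the edge weights as suitable \emph{ratios} of values of $f_0$ at neighbouring pressed corteges, e.g.\ of the shape $f_0(I|J)\,f_0(I'|J')^{-1}$ with $(I'|J')$ obtained from $(I|J)$ by sliding or shrinking an interval by one step; such ratios are legitimate since $f_0$ is $\Ascr^\ast$-valued. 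Two things then have to be checked: (a) with this weighting the path-sums $x_{ij}$ satisfy the relations~\refeq{xijkl}, so that $X$ really is a fine $q$-matrix; and (b) a $q$-deformed Lindstr\"om--Gessel--Viennot lemma identifies $\Delta_{X,q}(I|J)$ with the weight-sum over families of vertex-disjoint source-to-sink paths, which, for a \emph{pressed} cortege (say $J=[\,|J|\,]$), is forced or telescoping and collapses to exactly $f_0(I|J)$.

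The common engine behind (a) and (b) is that the commutation pattern of the chosen edge ratios must reproduce, verbatim, the commutation pattern of Manin's entries; this is exactly where the hypothesis that $f_0$ \emph{quasi-commutes on $Pint$}, with the $q$-exponents prescribed by~\refeq{qcom-int}, is used. As a preliminary one should verify, via~\refeq{qcom-int}, that any two pressed interval $q$-minors do quasi-commute — the cases being flag/flag, co-flag/co-flag and the mixed flag/co-flag case — so that the stated hypothesis on $f_0$ is consistent and carries precisely the information needed; one then transports these relations down to the edge weights, and up again, through the path-sums, to the $x_{ij}$ and thence to all minors. The bookkeeping of $q$-powers in this transport — keeping the exponents consistent with the $q^{\delta_i}$ of~\refeq{Pluck}--\refeq{Dodgson} and the $c$ of~\refeq{qcom-int} — is, I expect, the main technical obstacle; the rest is the standard planar-network machinery in its quantum guise.

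A final remark on where the values live and on uniqueness. Each $x_{ij}$ is a finite sum of products of edge ratios, hence lies in $\Ascr$ itself (not merely in a localization), again because $f_0$ is $\Ascr^\ast$-valued; so every $\Delta_{X,q}(I|J)\in\Ascr$, with $\Delta_{X,q}(\emptyset|\emptyset)=1$, and $f$ is a bona fide normalized $\Ascr$-valued QI-function. An alternative to the explicit construction is to work inside the generic quantized coordinate ring, where the pressed minors generate a quantum-torus localization on $mn$ symbols, and to push $f_0$ forward along the induced algebra homomorphism — this uses the known fact that all $q$-minors are quantum Laurent polynomials in the pressed ones, itself extractable from the recursions~\refeq{Pluck}--\refeq{Dodgson}. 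Either way, once $f$ exists, Theorem~\ref{tm:A} shows it is the unique extension taking non-zerodivisor values.
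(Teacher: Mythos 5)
Your plan coincides with the paper's proof in every structural respect: the paper also realizes $f_0$ as the family of pressed interval $q$-minors of the path matrix of the extended $m\times n$ grid $G_{m,n}$, defines the generator at an inner vertex $(i,j)$ as the ratio $f_0(\pi(i-1,j-1))^{-1}f_0(\pi(i,j))$ of values at consecutive pressed corteges (cf.~\refeq{wij2}), uses the fact that a pressed cortege admits a unique flow whose weight telescopes to recover $f_0$ on $Pint$, and then invokes Casteels' two results (Propositions~\ref{pr:Linds} and~\ref{pr:Path-A}) to conclude that the path matrix is a fine $q$-matrix whose $q$-minors give the desired QI-function. You also correctly identify the preliminary check~\refeq{f0qc} that any two pressed interval corteges quasi-commute universally.

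However, what you defer as ``the main technical obstacle'' is precisely the substance of the proof, and it cannot be taken on faith. One must show that the ratios $w(i,j)=B^{-1}A$ (with $A=f_0(\pi(i,j))$, $B=f_0(\pi(i-1,j-1))$, and similarly $C,D$ for $(i',j')$) satisfy exactly the grid relations~\refeq{uvq}: a factor $q$ when the two vertices lie in the same row, $q^{-1}$ in the same column (with the stated order), and plain commutation otherwise. Writing the requirement as $B^{-1}AD^{-1}C=q^{d}D^{-1}CB^{-1}A$ reduces it to computing $d=c_1-c_2-c_3+c_4$, where the $c_i$ are the exponents of the four pairwise quasi-commutations among $A,B,C,D$ supplied by~\refeq{qcom-int}; that exponent, $\beta+\delta-\alpha-\gamma$, depends on which corteges are flag and which are co-flag, on the relative interval lengths, and on the position of each vertex relative to the diagonal. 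The paper needs three lemmas (\ref{lm:41}--\ref{lm:43}) plus a three-case analysis to verify that the alternating sum collapses to exactly $1$, $-1$ or $0$ in the right circumstances --- e.g.\ in the off-row, off-column case a nonzero row contribution $\varphi=\pm1$ must be cancelled by an opposite column contribution $\psi=\mp1$, which happens for non-obvious reasons tied to where the diagonal crosses. Without this computation the claim that the path matrix is a fine $q$-matrix --- and hence the whole theorem --- remains unestablished.
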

 
It should be noted that Theorems~\ref{tm:A} and~\ref{tm:B} can be regarded as quantum analogs of corresponding results in~\cite{DKK} devoted to universal quadratic identities on minors of matrices over a commutative semiring (e.g. over $\Rset_{>0}$ or over the tropical semiring $(\Rset,+,\max)$); see Theorem~7.1 there.
  \smallskip
  
This paper is organized as follows. Section~\SEC{proofA} contains a proof of Theorem~\ref{tm:A}. Section~\SEC{flows} reviews a construction, due to Casteels~\cite{cast}, used in our approach to proving the second theorem. According to this construction (of which idea goes back to Cauchon diagrams in~\cite{cauch}), the minors of a generic $q$-matrix can be expressed as the ones of the so-called \emph{path matrix} of a special planar graph $G_{m,n}$, viewed as an extended square grid of size $m\times n$. There is a one-to-one correspondence between the pressed interval corteges in $\Escr^{m,n}$ and the inner vertices of $G_{m,n}$. This enables us to assign each generator involved in the construction of entries of the path matrix (formed in Lindstr\H{o}m's style via path systems, or ``flows'', in $G_{m,n}$) as the ratio of two values of $f_0$; this is just where we use that $f_0$ takes values in $\Ascr^\ast$. 
Relying on this construction, we prove Theorem~\ref{tm:B} in Section~\SEC{proofB}; here the crucial step is to show that the quasi-commutation relations on the values of $f_0$ imply the relations on generators needed to obtain a corrected path matrix. Finally, in Section~\SEC{unique} we describe a situation when a function $f_0$ on $Pint_{m,n}$ exposed in Theorem~\ref{tm:B} has a unique extension to $\Escr^{m,n}$ that is a QI-function, or, roughly speaking, when the values on $Pint$ and relations as in $\Pscr,\Pscr,\Dscr$ and $\Qscr$ determine a QI-function on $\Escr^{m,n}$, thus yielding all other universal QIs.
  
 
 \section{Proof of Theorem~\ref{tm:A}} \label{sec:proofA}
 
Let $f,g:\Escr^{m,n}\to \Ascr$ be as in the hypotheses of this theorem. To show that $f(I|J)=g(I|J)$ holds everywhere, we consider three possible cases for $(I|J)\in\Escr^{m,n}$. In the first and second cases, we use induction on the value 
  $$
  \sigma(I,J):=\max(I)-\min(I)+\max(J)-\min(J).
  $$
  
\noindent \emph{Case 1}. Let $(I|J)$ be such that: (i) $f(I'|J')=g(I'|J')$ holds for all $(I'|J')\in\Escr^{m,n}$ with $\sigma(I',J')<\sigma(I,J)$; and (ii) $I$ is not an interval. 

Define $i:=\min(I)$, $k:=\max(I)$ and $A:=I-\{i,k\}$. Take $\ell\in J$ and let $B:=J-\ell$. Since $I$ is not an interval, there is $j\in[m]$ such that $i<j<k$ and $j\notin I$. Then $j\notin A$ and $(Aik|B\ell)=(I|J)$. Applying to $f$ and $g$ Pl\H{u}cker-type relations as in~\refeq{Pluck}, we have
   \begin{equation} \label{eq:fPluck}
  f(Aj|B)f(Aik|B\ell)=f(Aij|B\ell)f(Ak|B)+f(Ajk|B\ell)f(Ai|B), \quad\mbox{and}
  \end{equation}
   \begin{equation} \label{eq:gPluck}
  g(Aj|B)g(Aik|B\ell)=g(Aij|B\ell)g(Ak|B)+g(Ajk|B\ell)g(Ai|B).
  \end{equation}

The choice of $i,j,k,\ell$ provides that in these relations, the number $\sigma(A',B')$ for each of the five corteges $(A'|B')$ different from $(Aik|B\ell)$ ($=(I|J)$) is strictly less than $\sigma(I|J)$. So $f$ and $g$ coincide on these $(A'|B')$, by condition~(i) on $(I|J)$. Subtracting~\refeq{gPluck} from~\refeq{fPluck}, we obtain
  $$
  f(Aj|B)\left(f(I|J)-g(I|J)\right)=0.
  $$
This implies $f(I|J)=g(I|J)$ (since $f(Aj|B)\ne 0$ and $f(Aj|B)$ is not a zerodivisor, by the hypotheses of the theorem).
  \medskip
 
\noindent\emph{Case 2}. Let $(I|J)$ be subject to condition~(i) from the previous case and suppose that $J$ is not an interval. Then taking $i:=\min(J)$, $k:=\max(J)$, $B:=J-\{i,k\}$, $\ell\in I$, $A:=I-\ell$, applying to $f,g$ the corresponding co-Pl\H{u}cker-type relations as in~\refeq{co-Pluck}, and arguing as above, we again obtain $f(I|J)=g(I|J)$.
 \medskip
  
Thus, it remains to examine double intervals  $(I|J)$. We rely on the equalities $f(I|J)=g(I|J)$ when $(I|J)$ is pressed (belongs to $Pint$), and use induction on the value
   $$
  \eta(I,J):=\max(I)+\min(I)+\max(J)+\min(J).
  $$

\noindent\emph{Case 3}. Let $(I|J)\in\Escr^{m,n}$ be a non-pressed double interval. 
Define $i:=\min(I)-1$, $k:=\max(I)$, $j:=\min(J)-1$, $\ell:=\max(J)$, $A:=I-k$, $B:=J-\ell$. Then $i,j\ge 1$ (since $(I|J)$ is non-pressed). Also $(I|J)=(Ak|B\ell)$. Suppose, by induction, that $f(I'|J')=g(I'|J')$ holds for all double intervals $(I'|J')\in\Escr^{m,n}$ such that $\eta(I',J')<\eta(I,J)$.    

Applying to $f$ and $g$ Dodgson-type relations as in~\refeq{Dodgson}, we have
  \begin{equation} \label{eq:fDodg}
 f(Ai|Bj)f(Ak|B\ell)  =  f(Aik|Bj\ell)f(A|B)+ qf(Ai|B\ell)f(Ak|Bj), \quad\mbox{and}
  \end{equation}
  \begin{equation} \label{eq:gDodg}
 g(Ai|Bj)g(Ak|B\ell)  =  g(Aik|Bj\ell)g(A|B)+ qg(Ai|B\ell)g(Ak|Bj).
  \end{equation}

One can see that for all corteges $(A'|B')$ occurring in these relations, except for $(Ak|B\ell)$, the value $\eta(A',B')$ is strictly less than  $\eta(I,J)$. Therefore, subtracting~\refeq{gDodg} from~\refeq{fDodg} and using induction on $\eta$, we obtain
   $$
   f(Ai|Bj)\left(f(Ak|B\ell)-g(Ak|B\ell)\right)=0,
   $$
whence $f(I|J)=g(I|J)$, as required.

This completes the proof of the theorem. \hfill \qed

 
 \section{Flows in a planar grid} \label{sec:flows}
 
The proof of Theorem~\ref{tm:B} essentially relies on a construction of quantum minors via certain path systems (``flows'') in a special planar graph. This construction is due to Casteels~\cite{cast} and it was based on ideas in Cauchon~\cite{cauch} and Lindstr\H{o}m~\cite{lind}. Below we review details of the method needed to us, mostly following terminology, notation and conventions used for the corresponding special case in~\cite{DK}.
 \medskip
 
\noindent\textbf{Extended grids.} 
~Let $m,n\in\Zset_{>0}$. We construct a certain planar directed graph, called an \emph{extended $m\times n$ grid} and denoted as $G_{m,n}=G=(V,E)$, as follows.
 \smallskip
 
(G1) The vertex set $V$ is formed by the points $(i,j)$ in the plane $\Rset^2$ such that $i\in\{0\}\cup[m]$, $j\in\{0\}\cup [n]$ and $(i,j)\ne (0,0)$. Hereinafter, it is convenient to us to assume that the first coordinate $i$ of a point $(i,j)$ in the plane is the \emph{vertical} one.
  \smallskip
  
(G2) The edge set $E$ consists of edges of two types: ``horizontal'' edges, or \emph{H-edges}, and ``vertical'' edges, or \emph{V-edges}.
  \smallskip
  
(G3) The H-edges are directed from left to right and go from $(i,j-1)$ to $(i,j)$ for all $i=1,\ldots,m$ and $j=1,\ldots,n$.
  \smallskip
  
(G4) The V-edges are directed downwards and go from $(i,j)$ to $(i-1,j)$ for all $i=1,\ldots,m$ and $j=1,\ldots,n$.
  \smallskip
  
Two subsets of vertices in $G$ are distinguished: the set $R=\{r_1,\ldots,r_m\}$ of \emph{sources}, where $r_i:=(i,0)$, and the set $C=\{c_1,\ldots,c_n\}$ of \emph{sinks}, where $c_j:=(0,j)$. The other vertices are called \emph{inner} and the set of these (i.e.,  $[m]\times[n]$) is denoted by $W=W_{G}$.

The picture illustrates the extended grid $G_{3,4}$.

\vspace{0.3cm}
\begin{center}
\includegraphics{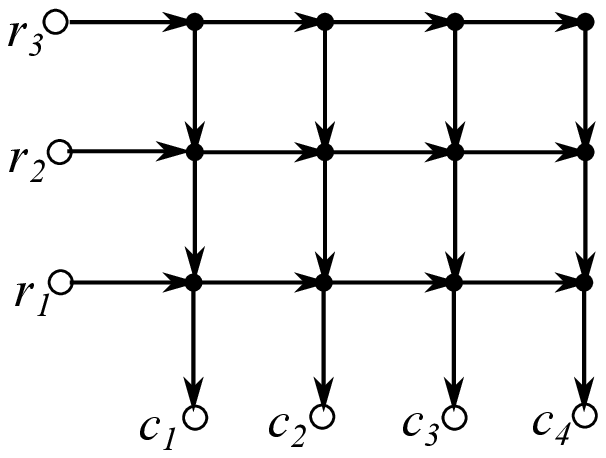}
\end{center}

Each inner vertex $v\in W$ of $G=G_{m,n}$ is regarded as a
\emph{generator}. This gives rise to assigning the \emph{weight} $w(e)$ to each edge $e=(u,v)\in E$ (going from a vertex $u$ to a vertex $v$) in a
way similar to that introduced for Cauchon graphs in~\cite{cast}, namely:
 \begin{numitem1} \label{eq:weight_edge}
   \begin{itemize}
\item[(i)] $w(e):=v$ if $e$ is an H-edge with $u\in R$;
\item[(ii)] $w(e):=u^{-1}v$ if $e$ is an H-edge and $u,v\in W$;
\item[(iii)] $w(e):=1$ if $e$ is a V-edge.
  \end{itemize}
  \end{numitem1}

This in turn gives rise to defining the weight $w(P)$ of a directed path
$P=(v_0,e_1,v_1,\ldots, e_k,v_k)$ (where $e_i$ is the edge from $v_{i-1}$ to $v_i$) to be the ordered (from left to right) product, namely:
  \begin{equation} \label{eq:wPe}
  w(P):=w(e_1)w(e_2)\cdots w(e_k).
  \end{equation}

Then $w(P)$ forms a Laurent monomial in elements of $W$. Note that when $P$ begins
in $R$ and ends in $C$, its weight can also be expressed in the following
useful form: if $u_1,v_1,u_2,v_2,\ldots, u_{d-1},v_{d-1},u_d$ is the sequence of vertices where $P$ makes turns (from ``east'' to ``sought'' at each $u_i$, and from ``sought'' to ``east'' at each $v_i$), then, due to the ``telescopic effect'' caused by~\refeq{weight_edge}(ii), there holds
  \begin{equation} \label{eq:telescop}
  w(P)=u_1v_1^{-1}u_2v_2^{-1}\cdots u_{d-1}v_{d-1}^{-1} u_d.
  \end{equation}

We assume that the elements of $W$ obey quasi-commutation laws which look somewhat
simpler than those in~\refeq{xijkl}; namely, for distinct inner vertices $u=(i,j)$ and $v=(i',j')$,
 \begin{numitem1} \label{eq:uvq}
   \begin{itemize}
\item[(i)] if $i=i'$ and $j<j'$, then $uv=qvu$;
\item[(ii)] if $i>i'$ and $j=j'$, then $vu=quv$;
\item[(iii)]  otherwise $uv=vu$,
   \end{itemize}
     \end{numitem1}
\noindent where, as before, $q\in {\mathbb K}^\ast$. (Note that $G$ has a horizontal (directed) path from $u$ to $v$ in~(i), and a vertical path from $u$ to $v$ in~(ii).)
  \medskip

\noindent\textbf{Path matrix and flows.} ~To be consistent with the vertex notation in extended grids, we visualize matrices in the Cartesian form: for an $m\times n$ matrix $A=(a_{ij})$, the row indexes $i=1,\ldots,m$ are assumed to grow upwards, and the column indexes $j=1,\ldots,n$ from left to right. 

Given an extended $m\times n$ grid $G=G_{m,n}=(V,E)$ with the corresponding partition $(R,C,W)$ of $V$ as above, we form the
\emph{path matrix} $\Path=\Path_G$ of $G$ in a spirit of~\cite{cast}; namely, $\Path$ is the $m\times n$ matrix whose entries are defined by
  \begin{equation} \label{eq:Mat}
  \Path(i|j):=\sum\nolimits_{P\in\Phi_G(i|j)} w(P), \qquad (i,j)\in [m]\times [n],
  \end{equation}
where $\Phi_G(i|j)$ is the set of (directed) paths from the source $r_i$ to the sink $c_j$ in $G$. Thus, the entries of $\Path_G$ belong to the $\mathbb K$-algebra $\Lscr_{G}$ of Laurent polynomials generated by the set $W$ if inner vertices of $G$ subject to~\refeq{uvq}.
 \medskip
 
\noindent \textbf{Definition.} ~
Let $(I|J)\in\Escr^{m,n}$. Borrowing terminology from~\cite{DKK}, 
by an $(I|J)$-\emph{flow} we mean a set $\phi$ of \emph{pairwise disjoint}
directed paths from the source set $R_I:=\{r_i\colon i\in I\}$ to the sink set
$C_J:=\{c_j\colon j\in J\}$ in $G$.
 \smallskip

The set of $(I|J)$-flows $\phi$ in $G$ is denoted by $\Phi(I|J)=\Phi_G(I|J)$.
We order the paths forming $\phi$ by increasing the indexes of sources: if $I$ consists of $i(1)<i(2)<\cdots< i(k)$ and $J$ consists of
$j(1)<j(2)<\cdots<j(k)$ and if $P_\ell$ denotes the path in $\phi$ beginning at
$r_{i(\ell)}$, then $P_\ell$ is just $\ell$-th path in $\phi$, $\ell=1,\ldots,k$. Note that the planarity of $G$ and the fact that the paths in $\phi$ are pairwise disjoint imply that each $P_\ell$ ends at the sink $c_{j(\ell)}$.

Similar to the assignment of weights for path systems in~\cite{cast}, we define the weight of $\phi=(P_1,P_2,\ldots,P_k)$ to be the ordered product
 \begin{equation} \label{eq:w_phi}
  w(\phi):=w(P_1)w(P_2)\cdots w(P_k).
  \end{equation}

Using a version of Lindstr\"om Lemma, Casteels showed a correspondence between path systems and $q$-minors of path matrices.
  \begin{prop}[\cite{cast}] \label{pr:Linds}
For the extended grid $G=G_{m,n}$ and any $(I|J)\in \Escr^{m,n}$, 
  \begin{equation} \label{eq:Lind}
\Delta(I|J)_{\Path_G,q}=\sum\nolimits_{\phi\in\Phi_G(I|J)} w(\phi).
  \end{equation}
  \end{prop}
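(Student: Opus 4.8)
The plan is to establish the identity \refeq{Lind} by the classical Lindström–Gessel–Viennot argument, adapted to the noncommutative (quasi-commuting) setting, exactly in the style of Casteels~\cite{cast}; the only thing to check carefully is that the signs and the ordering of factors work out consistently with the definition~\refeq{qminor} of the $q$-minor and the weight conventions~\refeq{weight_edge}, \refeq{uvq}.

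First I would expand the right-hand side of~\refeq{Lind} before imposing disjointness: writing $\Delta(I|J)_{\Path_G,q}$ via~\refeq{qminor} with $i_d$ the elements of $I$ and $j_d$ those of $J$, and then substituting~\refeq{Mat} for each entry $\Path(i_d|j_{\sigma(d)})$, one gets a sum over permutations $\sigma\in S_k$ and over arbitrary (not necessarily disjoint) $k$-tuples of paths $(P_1,\dots,P_k)$ with $P_d$ running from $r_{i_d}$ to $c_{j_{\sigma(d)}}$, each term carrying the coefficient $(-q)^{\ell(\sigma)}$ and the ordered weight product $w(P_1)\cdots w(P_k)$. Second I would argue that every such term in which the $P_d$ are pairwise disjoint must have $\sigma=\mathrm{id}$: this is the planarity observation already noted in the text (since sources and sinks are linearly ordered along the boundary, a disjoint path system from $R_I$ to $C_J$ can only realize the order-preserving bijection), so these terms contribute precisely $\sum_{\phi\in\Phi_G(I|J)} w(\phi)$ with coefficient $(-q)^0=1$, matching the left-hand side of~\refeq{Lind} once the non-disjoint terms are shown to cancel.

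Third, and this is the heart of the matter, I would show that the non-disjoint contributions cancel in pairs via a weight-preserving, sign-reversing involution. Given a $k$-tuple $(P_1,\dots,P_k)$ that is not pairwise disjoint, take the smallest index $d$ such that $P_d$ meets some later $P_{d'}$, then at the first (in a fixed canonical order along $P_d$) intersection vertex swap the tails of $P_d$ and $P_{d'}$ with the smallest such $d'$; this produces another non-disjoint tuple, toggles $\sigma$ by a transposition (changing $(-q)^{\ell(\sigma)}$ by a factor $-q^{\pm1}$), and the telescoping form~\refeq{telescop} of path weights together with the commutation rules~\refeq{uvq} must be used to check that the ordered weight product changes by exactly the compensating factor $q^{\mp1}$, so the two terms sum to zero. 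The main obstacle is precisely this last bookkeeping step: in the noncommutative setting one has to verify that exchanging tails at a common vertex $v=(i,j)$ — which changes which generators appear to the left of which in the product $w(P_1)\cdots w(P_k)$ — produces exactly the power of $q$ dictated by~\refeq{uvq}, and that this power cancels against the change in $(-q)^{\ell(\sigma)}$; getting the direction of the inequality in~\refeq{uvq}(i)–(ii) to line up with ``H-edge vs.\ V-edge at a turn'' and with the left-to-right factor order is the delicate part. I would handle it by reducing, as in~\cite{cast}, to the case of two paths crossing at a single vertex and computing both sides explicitly, then invoking that the general case follows since all other factors in the two ordered products are literally identical and commute past the swapped block by~\refeq{uvq}(iii). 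Once the involution is shown to be well-defined (an involution) and to have the claimed effect on sign and weight, the cancellation is complete and~\refeq{Lind} follows.
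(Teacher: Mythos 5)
The paper does not prove Proposition~\ref{pr:Linds} at all: it is quoted from Casteels~\cite{cast} (with a generalization attributed to \cite[Theorem~3.1]{DK}), so there is no in-paper argument to compare yours against. Your overall strategy --- expand the $q$-minor of $\Path_G$ over permutations and arbitrary path tuples, observe that disjoint tuples force $\sigma=\mathrm{id}$ by planarity, and kill the non-disjoint tuples by a weight-compatible sign-reversing involution --- is indeed the quantum Lindstr\H{o}m--Gessel--Viennot argument that underlies Casteels' proof, and your first two steps are sound.

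However, the third step, which you rightly identify as the heart of the matter, contains two concrete errors rather than just deferred bookkeeping. First, replacing $\sigma$ by $\sigma\circ(d\;d')$ changes $\ell(\sigma)$ by an odd integer, not necessarily by $\pm 1$, so the coefficient does not change by a factor $-q^{\pm1}$ in general; the compensating power of $q$ coming from the weight must therefore be computed configuration by configuration (or the involution must be engineered so that only an adjacent transposition occurs), and your claimed cancellation $(-q^{\pm1})\cdot q^{\mp1}=-1$ is not the right identity to verify. Second, the reduction ``all other factors commute past the swapped block by~\refeq{uvq}(iii)'' is unjustified: the paths $P_e$ with $e\neq d,d'$, and indeed the initial segments of $P_d$ and $P_{d'}$ themselves, routinely use vertices in the same rows and columns as the swapped tails, so rules~\refeq{uvq}(i)--(ii) contribute nontrivial powers of $q$ when the tails are moved past them in the ordered product $w(P_1)\cdots w(P_k)$. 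Accounting for exactly these powers, together with verifying that the choice of $(d,d')$ and of the crossing vertex is stable under the swap (the usual Gessel--Viennot subtlety, which your ``first intersection, smallest $d'$'' convention does not obviously satisfy), is precisely the content of the quantum Lindstr\H{o}m lemma; as written, your sketch asserts the answer at these points rather than proving it.
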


\noindent (This is generalized to a larger set of graphs and their path matrices in~\cite[Theorem~3.1]{DK}.)

The next property, surprisingly provided by~\refeq{uvq}, is of most importance to us.
  \begin{prop} [\cite{cast}] \label{pr:Path-A}
The entries of $\Path_G$ obey Manin's relations (similar to those in~\refeq{xijkl}).
 \end{prop}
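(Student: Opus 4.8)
The plan is to verify the four families of relations in~\refeq{xijkl} directly for the entries $\Path(i|j)$, using the flow/weight formalism. Fix four indices $i<\ell$ (rows) and $j<k$ (columns), so the relevant entries are $x_{ij}:=\Path(i|j)$ and so on. Each such entry is a sum of weights $w(P)$ over paths $P$ in $\Phi_G(i|j)$, and by~\refeq{telescop} every such weight is a Laurent monomial of the form $u_1v_1^{-1}u_2v_2^{-1}\cdots u_d$ whose ``turning vertices'' interleave down a staircase. First I would record the key structural fact: a path $P\in\Phi_G(i|j)$ is completely determined by its sequence of turning vertices, which lie in successive rows strictly between (after reindexing) and in columns that are weakly increasing; consequently the leftmost vertex reachable on $P$ is in row $i$ near column $1$ and the path ``ends'' entering $c_j$. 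This lets me read off, for any two entries, the commutation behaviour of their monomials purely from the relative position of the endpoints.

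Next I would reduce the four relations to statements about products of \emph{individual} monomials $w(P)w(Q)$, expanded bilinearly. For the first two relations (the pure $q$-scalings $x_{ij}x_{ik}=qx_{ik}x_{ij}$ and $x_{ij}x_{\ell j}=qx_{\ell j}x_{ij}$) the claim is that for every pair of paths $P$ from $r_i$ to $c_j$ and $Q$ from $r_i$ to $c_k$ (same source, $j<k$) one has $w(P)w(Q)=q\,w(Q)w(P)$, and symmetrically for the second. The mechanism is~\refeq{uvq}: moving the monomial $w(Q)$ past $w(P)$, each generator of $Q$ either commutes with each generator of $P$ or contributes a factor $q^{\pm1}$ according to whether it lies in the same row (to the right) or same column (below); the net exponent is forced to be exactly $1$ by a bookkeeping argument comparing, column by column and row by row, how many turning vertices of $P$ and $Q$ share a coordinate. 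Because $P$ and $Q$ share the source $r_i$ and the sinks are ordered $c_j$ before $c_k$, the planar geometry pins this count down independently of which particular $P,Q$ we picked.

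For the third relation $x_{ik}x_{\ell j}=x_{\ell j}x_{ik}$ (with $i<\ell$, $j<k$, so the two paths ``cross in position'' but use disjoint extreme rows/columns), I expect every pair of monomials to commute outright: the turning vertices of a path from $r_i$ to $c_k$ occupy rows in $[1..i]$ (ish) and those of a path from $r_\ell$ to $c_j$ occupy a disjoint range, and likewise for columns, so case~(iii) of~\refeq{uvq} applies generator-by-generator. The genuinely substantive case is the fourth, $x_{ij}x_{\ell k}-x_{\ell k}x_{ij}=(q-q^{-1})x_{ik}x_{\ell j}$, which is not a quasi-commutation but a true quadratic identity; I would prove it by the standard flow/crossing bijection. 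Pairs $(P,Q)$ with $P\in\Phi_G(i|j)$, $Q\in\Phi_G(\ell|k)$ split into the disjoint (non-crossing) ones and the crossing ones; for a disjoint pair the same row/column counting shows $w(P)w(Q)$ and $w(Q)w(P)$ differ by exactly the right power of $q$, while for a crossing pair one swaps the tails at the first crossing vertex to produce a pair $(P',Q')$ with $P'\in\Phi_G(i|k)$, $Q'\in\Phi_G(\ell|j)$, and a careful comparison of the two Laurent monomials (again via~\refeq{uvq}, tracking the shared crossing vertex) yields the telescoped identity with coefficient $q-q^{-1}$. I anticipate that \textbf{this last step — controlling exactly how the weight changes under the tail-swap, so that the $(q-q^{-1})$ coefficient appears with the correct normalization} — is the main obstacle; everything else is the row/column exponent bookkeeping enabled by~\refeq{telescop} and~\refeq{uvq}. (Since this is precisely Casteels' theorem, an alternative is to cite~\cite{cast} and merely indicate how~\refeq{uvq} feeds into his argument; but the self-contained route above is the one I would carry out.)
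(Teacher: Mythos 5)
The paper offers no internal proof of this proposition: it is quoted verbatim from Casteels~\cite{cast}, so there is nothing to compare your argument against except the cited source. Judged on its own terms, your outline contains a genuine gap in the treatment of the first three relations. You claim that the pure quasi-commutations hold \emph{monomial by monomial}, i.e.\ that $w(P)w(Q)=q\,w(Q)w(P)$ for \emph{every} pair $P\in\Phi_G(i|j)$, $Q\in\Phi_G(i|k)$ with $j<k$, the count being ``pinned down independently of which particular $P,Q$ we picked.'' This is false whenever the two paths intersect. In $G_{2,3}$, let $P\in\Phi_G(2|2)$ be the path turning at $(2,2)$, so $w(P)=w(2,2)$ by~\refeq{telescop}, and let $Q\in\Phi_G(2|3)$ be the path turning down at $(2,1)$ and running along row $1$, so $w(Q)=w(2,1)w(1,1)^{-1}w(1,3)$. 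By~\refeq{uvq}, $w(2,2)$ commutes with $w(1,1)$ and $w(1,3)$, while $w(2,1)w(2,2)=q\,w(2,2)w(2,1)$; hence $w(P)w(Q)=q^{-1}w(Q)w(P)$ --- the opposite power of $q$ from what your bookkeeping requires. The third relation fails pointwise too: in $G_{3,3}$ the paths with weights $w(2,2)w(1,2)^{-1}w(1,3)\in\Phi_G(2|3)$ and $w(3,1)w(1,1)^{-1}w(1,2)\in\Phi_G(3|2)$ satisfy $w(P)w(Q)=q^{-2}\,w(Q)w(P)$ rather than commuting.

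The consequence is that the crossing/tail-swap analysis you reserve for the fourth relation is in fact needed for all four: in each case the identity emerges only after summing over all pairs of paths, with intersecting pairs matched against one another by an involution that exchanges tails at a shared vertex (or, alternatively, by Casteels' inductive argument via Cauchon's deleting derivations, which sidesteps the path-level combinatorics entirely). Your structural claims about turning vertices are also imprecise --- a path from $r_i$ may turn in any row from $i$ down to $1$, so the row and column supports of the monomials for $x_{ik}$ and $x_{\ell j}$ are not disjoint --- and this is precisely why the generator-by-generator application of case~(iii) of~\refeq{uvq} does not go through. If you want a self-contained proof, the involution must be set up uniformly for all four relations from the start.
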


It follows that the $q$-minors of $\Path_G$ satisfy all universal QIs, and therefore, the function $g:\Escr^{m,n}\to\Lscr_{G}$ defined by $g(I|J):=\Path_G(I|J)$ is a QI-function. 

 
 \section{Proof of Theorem~\ref{tm:B}} \label{sec:proofB}
 
Let $f_0:Pint_{m,n}\to \Ascr^\ast$ be a function as in the hypotheses of this theorem. Our goal is to extend $f_0$ to a QI-function $f$ on $\Escr^{m,n}$. The idea of our construction is prompted by Propositions~\ref{pr:Linds} and~\ref{pr:Path-A}; namely, we are going to obtain the desired $f$ as the function of $q$-minors of an appropriate path matrix $\Path_G$ for the extended $m\times n$ grid $G=G_{m,n}$. 

For this purpose, we first have to determine the ``generators'' in $W$ in terms of values of $f_0$ (so as to provide that these values are consistent with the corresponding pressed interval $q$-minors of the path matrix), and second, using the quasi-commutation relations  (as in~\refeq{quasi-commut}) on the values of $f_0$, to verify validity of relations~\refeq{uvq} on the generators. Then $Path_G$ will be indeed a fine $q$-matrix and its $q$-minors will give the desired QI-function $f$.

(It should be emphasized that we may speak of a vertex of $G$ in two ways: either as a point in $\Rset^2$, or as a generator of the corresponding algebra. In the former case, we use the coordinate notation $(i,j)$ (where $i\in\{0\}\cup[m]$ and $j\in\{0\}\cup[n]$). And in the latter case, we use notation $w(i,j)$, referring to it as the \emph{weight} of $(i,j)$.)

To express the elements of $W$ via values of $f_0$, we associate each pair $(i,j)\in[m]\times[n]$ with the pressed interval cortege $\pi(i,j)=(I|J)$, where 
  \begin{numitem1} \label{eq:pi(i,j)}
$I:=[i-k+1..i]$ and $J:=[j-k+1..j]$, where $k:=\min\{i,j\}$.
  \end{numitem1}
 
 In other words, if $i\le j$ (i.e., $(i,j)$ lies ``south-east'' from the ``diagonal'' $\{\alpha,
\alpha\}$ in $\Rset^2$), then $(I|J)$ is the co-flag interval cortege with $I=[i]$ and $\max(J)=j$, and if $i\ge j$ (i.e., $(i,j)$ is ``north-west'' from the diagonal), then $(I|J)$ is the flag interval cortege with $\max(I)=i$ and $J=[j]$. Also it is useful to associate to $(i,j)$: the (almost rectangular) subgrid induced by the vertices in $(\{0\}\cup[i])\times (\{0\}\cup[j])-\{(0,0)\}$, and the \emph{diagonal} $D(i|j)$ formed by the vertices $(i,j),(i-1,j-1),\ldots, (i-k+1,j-k+1)$. See the picture where the left (right) fragment illustrates the case $i<j$ (resp. $i>j$), the subgrids  are indicated by thick lines, and the diagonals $D(i,j)$ by bold circles.

\vspace{-0cm}
\begin{center}
\includegraphics{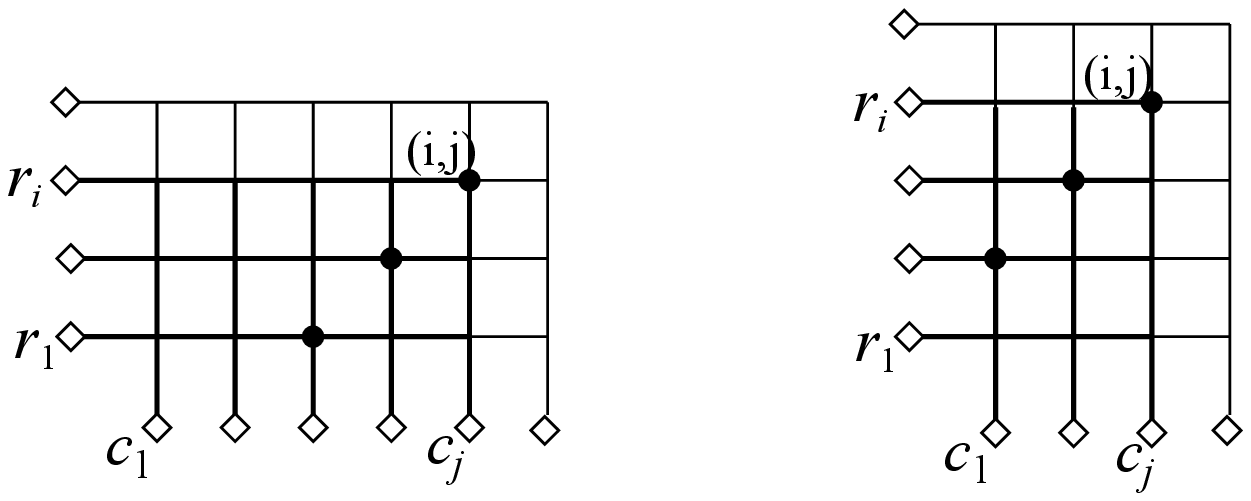}
\end{center}
\vspace{-0cm}

An important feature of a pressed interval cortege $(I|J)\in\Escr^{m,n}$ (which is easy to see) is that 
  \begin{numitem1} \label{eq:piflow}
$\Phi(I|J)$ consists of a unique flow $\phi$ and this flow is formed by paths $P_1,\ldots,P_k$, where for $i:=\max(I)$, $j:=\max(J)$, $k:=\min\{i,j\}$, and $\ell=1,\ldots,k$,
the path $P_\ell$ begins at $r_{i-k+\ell}$, ends at $c_{j-k+\ell}$ and makes exactly one turn, namely, the east to sought turn at the vertex $(i-k+\ell,j-k+\ell)$ of the diagonal $D(i|j)$.
  \end{numitem1}
  
We denote this flow $(P_1,\ldots,P_k)$ as $\phi(i|j)$; it is illustrated in the picture (for both cases $i<j$ and $i>j$ from the previous picture).

\vspace{-0cm}
\begin{center}
\includegraphics{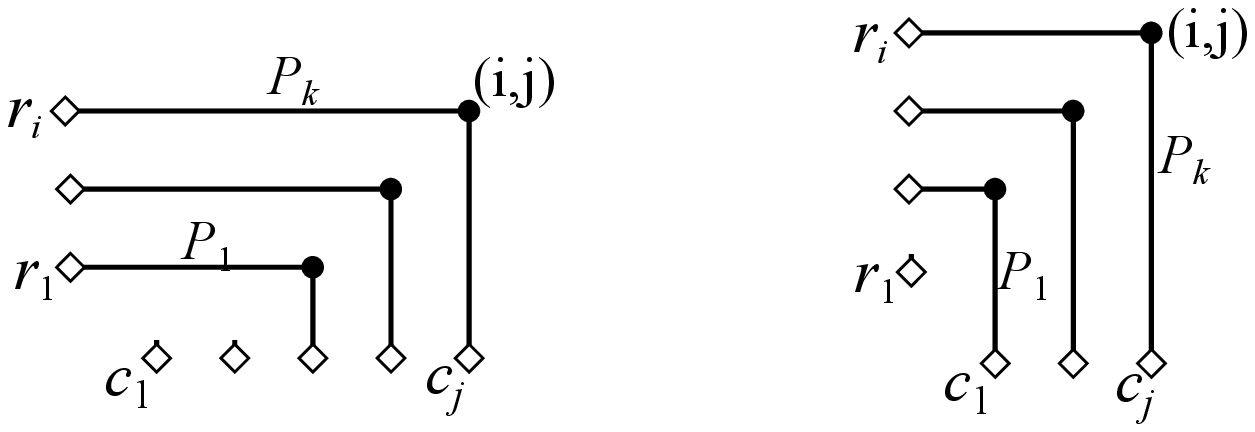}
\end{center}
\vspace{-0cm}

Therefore, for each $(i,j)\in[m]\times[n]$, taking the cortege $(I|J)=\pi(i,j)$ and the flow $\phi(i|j)=(P_1,\ldots,P_k)$ with $k=\min\{i,j\}$ and using expressions~\refeq{telescop} and~\refeq{w_phi}  for them, we obtain that
  \begin{equation} \label{eq:wphiij}
  \sum\nolimits_{\phi\in\Phi_G(I|J)} w(\phi)=w(\phi(i|j))=
       w(i-k+1,j-k+1)\cdots w(i-1,j-1)w(i,j).
      \end{equation}

Now imposing the conditions
  \begin{equation} \label{eq:wphi-f0}
  w(\phi(i|j)):=f_0(I|J) \quad \mbox{for all}\;\; (I|J)=\pi(i,j)\in Pint_{m,n},
    \end{equation}
we come to the rule of defining appropriate weights of inner vertices of $G$. Namely, relying on~\refeq{wphiij}, we define $w(i,j)$ for each $(i,j)\in[m]\times[n]$ by
  \begin{equation} \label{eq:wij2}
  w(i,j):=\left\{
    \begin{array}{rcl}
    f_0(\{i\}|\{j\}) \qquad \qquad\qquad &&\mbox{if}\;\; \min\{i,j\}=1, \\
     (f_0(\pi(i-1,j-1)))^{-1} f_0(\pi(i,j)) \quad &&\mbox{otherwise}.
    \end{array}
    \right.
     \end{equation}
Such a $w(i,j)$ is well-defined since $f_0(\pi(i-1,j-1))$ is invertible.

The crucial step in our proof is to show that these weights satisfy the relations as in~\refeq{uvq}, i.e., for $(i,j)$ and $(i',j')$,
  \begin{numitem1} \label{eq:wcast}
  \begin{itemize}
\item[(i)] if $i=i'$ and $j<j'$, then $w(i,j)w(i',j')=qw(i',j')w(i,j)$;
\item[(ii)]  if $i>i'$ and $j=j'$, then $w(i',j')w(i,j)=qw(i,j)w(i',j')$;
\item[(iii)] otherwize $w(i,j)w(i',j')=w(i',j')w(i,j)$.
  \end{itemize}
  \end{numitem1}
This would provide that $Path_G$ is indeed a fine $q$-matrix, due to~\refeq{Lind} and Proposition~\ref{pr:Path-A}, and setting $f(I|J):=\Delta(I|J)_{Path_G}$ for all $(I|J)\in \Escr^{m,n}$, we would obtain the desired function, thus completing the proof of the theorem.

First of all we have to explain that 
  \begin{numitem1} \label{eq:f0qc}
$f_0$ satisfies the quasi-commutation relation for any two pressed interval corteges $(I|J),(I'|J')\in Pint$, i.e., $f_0(I|J)f_0(I'|J')=q^c f_0(I'|J')f_0(I|J)$ holds for some $c\in\Zset$.
 \end{numitem1}
 
 This is equivalent to saying that such corteges determine a universal QI of the form~\refeq{quasi-commut} on associated $q$-minors. To see the latter, assume that $|I|\ge |I'|$ and define $\alpha,\beta,\gamma,\delta$ as in~\refeq{alphabeta}. One can check that: $\gamma=\delta=0$ if both interval corteges are flag ones; $\alpha=\beta=0$ if they are co-flag ones; and either $\beta=\gamma=0$ or $\alpha=\delta=0$ (or both) if one of these is a flag, and the other a co-flag interval cortege. So in all cases, we have $\alpha\gamma=\beta\delta=0$, and~\refeq{f0qc} follows from~\refeq{qcom-int}.
 
Next we start proving~\refeq{wcast}. Given $(i,j),(i',j')\in[m]\times[n]$, let $(I|J):=\pi(i,j)$ and $(I'|J'):=\pi(i',j')$, and define
   $$
   A:=f_0(I|J), \quad B:=f_0(I-i|J-j), \quad C:=f_0(I'|J'), \quad D:=f_0(I'-i'|J'-j'),
   $$
letting by definition $B:=1$ ($D:=1$) if $|I|=1$ (resp. $|I'|=1$). (Here for an element $p\in P$, we write $P-p$ for $P-\{p\}$.)

Then $w(i,j)$ is rewritten as $B^{-1}A$, and $w(i',j')$ as $D^{-1}C$ (by~\refeq{wij2}), and our goal is to show that 
  \begin{equation} \label{eq:BADC}
 B^{-1}AD^{-1}C=q^d D^{-1}CB^{-1}A,
  \end{equation}
where $d$ is as required in~\refeq{wcast} (i.e., equal to 1, -1, 0 in cases (i),(ii),(iii), respectively).

Define $c_1,c_2,c_3,c_4$ from the quasi-commutation relations (as in~\refeq{quasi-commut})
  \begin{equation} \label{eq:cccc}
  AC=q^{c_1}CA,\quad AD=q^{c_2}DA,\quad BC=q^{c_3}CB,\quad BD=q^{c_4}DB.
  \end{equation}
  
One can see that
  \begin{equation} \label{eq:d=c}
  d=c_1-c_2-c_3+c_4.
  \end{equation}
Indeed, in order to transform the string $ B^{-1}AD^{-1}C$ into $D^{-1}CB^{-1}A$, one should swap each of $A,B^{-1}$ with each of $C,D^{-1}$. The second equality in~\refeq{cccc} implies $AD^{-1}=q^{-c_2}D^{-1}A$, and for similar reasons, $B^{-1}C=q^{-c_3}CB^{-1}$ and $B^{-1}D^{-1}=q^{c_4}D^{-1}B^{-1}$. 
  \smallskip

Now we are ready to examine possible combinations for $(i,j)$ and $ (i',j')$ and compute $d$ in these cases by using~\refeq{d=c}. We will denote the intervals $I-i,\;J-j,\;I'-i',\; J'-j'$ in question by $\tilde I,\,\tilde J,\,\tilde I',\,\tilde J'$, respectively. Also for an ordered pair $((P|Q),(P'|Q'))$ of double intervals in $\Escr^{m,n}$ (where $|P'|=|Q'|$ may exceed $|P|=|Q|$), we define
  \begin{equation} \label{eq:PPQQ}
  \left.
    \begin{array}{rcl}
    \alpha(P,P')&:=& \min\{|\{p'\in P'\colon p'<\min(P)\}|,\; |\{p\in P\colon p>
                                                   \max (P')\}|\}; \\
    \beta(P,P')&:=& \min\{|\{p'\in P'\colon p'>\max(P)\}|,\; |\{p\in P\colon p<
                                                   \min (P')\}|\},
    \end{array}
    \right.
     \end{equation}
and define $\gamma(Q,Q')$ and $\delta(Q,Q')$ in a similar way (this matches the definition of $\alpha,\beta,\gamma,\delta$ in~\refeq{alphabeta} when $|P|\ge|P'|$). 
Using~\refeq{qcom-int}, we observe that the sum $\beta(I,I')+\delta(J,J')-\alpha(I,I')-\gamma(J,J')$ is equal to $c_1$, and similarly for the pairs concerning $c_2,c_3,c_4$. 

In our analysis we also will use the values
 \begin{gather} 
 \varphi:= (\beta(I,I')-\alpha(I,I'))-(\beta(I,\tilde I')-\alpha(I,\tilde I'))
   -(\beta(\tilde I,I')-\alpha(\tilde I,I'))+(\beta(\tilde I,\tilde I')-\alpha(\tilde I,\tilde I'));
                                             \nonumber \\
 \psi:= (\delta(J,J')-\gamma(J,J'))-(\delta(J,\tilde J')-\gamma(J\tilde J'))
  -(\delta(\tilde J,J')-\gamma(\tilde J,J'))+(\delta(\tilde J,\tilde J')-\gamma(\tilde J,\tilde J')).
                                                    \nonumber
  \end{gather}

In view of~\refeq{d=c} and~\refeq{PPQQ},
  \begin{equation} \label{eq:ccccd}
    \varphi+\psi=c_1-c_2-c_3+c_4=d.
  \end{equation}
  
The lemmas below compute $\varphi$ using~\refeq{PPQQ}. Let $r:=\min(I)$ ($=\min(\tilde I)$) and $r':=\min(I')$ ($=\min(\tilde I')$). 

 \begin{lemma} \label{lm:41}
 Suppose that $|I|\ne|I'|$ and $i\ne i'$. Then $\varphi=0$.
   \end{lemma}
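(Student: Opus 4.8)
The plan is to turn $\varphi$ into an explicit expression in a handful of integer parameters and then see it vanish by a short case check. First I would record the structural data coming from the fact that $(I|J)=\pi(i,j)$ and $(I'|J')=\pi(i',j')$: writing $\tilde I=I-i$ and $\tilde I'=I'-i'$ and $k:=|I|$, $k':=|I'|$, all four intervals share $\min I=\min\tilde I=r$ and $\min I'=\min\tilde I'=r'$, while $\max I=i$, $\max\tilde I=i-1$, $\max I'=i'$, $\max\tilde I'=i'-1$, and $|I|=k$, $|\tilde I|=k-1$, $|I'|=k'$, $|\tilde I'|=k'-1$. So $\varphi$ is the alternating double difference of $F(P,P'):=\beta(P,P')-\alpha(P,P')$ over $P\in\{I,\tilde I\}$ (removing the max) and $P'\in\{I',\tilde I'\}$ (removing the max).

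The next step is to get closed forms for $\alpha(P,P')$ and $\beta(P,P')$ from \refeq{PPQQ} when $P,P'$ are intervals. The point is that an interval that is no longer than another cannot stick out beyond it on both sides, so the inner $\min$ in \refeq{PPQQ} collapses to a predictable one of its two arguments. One obtains, with $x^+:=\max(x,0)$: when $|P|\ge|P'|$,
  $$
  \alpha(P,P')=\min\bigl((\min P-\min P')^+,\,|P'|\bigr),\qquad \beta(P,P')=\min\bigl((\max P'-\max P)^+,\,|P'|\bigr);
  $$
and when $|P|\le|P'|$,
  $$
  \alpha(P,P')=\min\bigl((\max P-\max P')^+,\,|P|\bigr),\qquad \beta(P,P')=\min\bigl((\min P'-\min P)^+,\,|P|\bigr).
  $$
(Verifying these two cases is elementary but is the one place one has to be careful, chiefly about the truncations by $|P'|$ resp. $|P|$.)

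Now the hypothesis $|I|\ne|I'|$ does exactly the work of placing all four pairs $(P,P')$ uniformly in one regime. In the case $k>k'$, each pair has $|P|\ge|P'|$, so $\alpha(P,P')=\min((r-r')^+,|P'|)$ depends on the pair only through $|P'|\in\{k',k'-1\}$; hence $\alpha(I,\cdot)=\alpha(\tilde I,\cdot)$ and the $\alpha$-terms cancel identically in the alternating sum, leaving $\varphi=\beta(I,I')-\beta(I,\tilde I')-\beta(\tilde I,I')+\beta(\tilde I,\tilde I')$ with $\beta(P,P')=\min((\max P'-\max P)^+,|P'|)$. Plugging in the data and setting $t:=i'-i$ (nonzero since $i\ne i'$) gives
  $$
  \varphi=\min(t^+,k')-\min((t-1)^+,k'-1)-\min((t+1)^+,k')+\min(t^+,k'-1),
  $$
and the subcases $t\ge 1$ and $t\le -1$ each collapse to $0$ using $\min(a-1,b-1)=\min(a,b)-1$ and the fact that $\min(a+1,b)-\min(a,b)$ is $1$ if $a<b$ and $0$ otherwise. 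The case $k<k'$ is the mirror image: now every pair has $|P|\le|P'|$, the $\beta$-terms cancel (they depend on the pair only through $|P|$), and the surviving $\alpha$-sum, with $s:=i-i'\ne 0$, reduces to $0$ by the identical manipulations.

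The main obstacle is really just the bookkeeping in the middle step — extracting the correct piecewise formulas for $\alpha,\beta$, truncations included — after which the computation is mechanical. I would also flag that the hypothesis $i\ne i'$ is genuinely used: taking $t=0$ (or $s=0$) in the final expression yields $-1$ rather than $0$, which is consistent with the fact that the companion lemmas must handle the case $i=i'$ separately.
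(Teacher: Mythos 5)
Your proof is correct, and it takes a somewhat different route from the paper's. The paper argues by a positional case analysis on the two intervals (nested, disjoint, and the two overlapping configurations), in each case observing that one of $\alpha,\beta$ vanishes identically and the other either cancels or telescopes in the alternating sum. You instead first extract uniform closed formulas for $\alpha(P,P')$ and $\beta(P,P')$ from \refeq{PPQQ} in each of the two size regimes $|P|\ge|P'|$ and $|P|\le|P'|$ (these formulas check out: when $|P|\ge|P'|$ one has $\min P-\min P'\le \max P-\max P'$, so the inner minimum indeed collapses as you claim, and the two regimes agree on the boundary $|P|=|P'|$), then use the hypothesis $|I|\ne|I'|$ to place all four pairs in a single regime, kill the $\alpha$-block (resp.\ $\beta$-block) by its independence of the first argument, and finish with the elementary identities $\min(a-1,b-1)=\min(a,b)-1$ and the unit-increment property of $a\mapsto\min(a,b)$. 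The computation is right, including the truncation edge cases $k'=1$ (where $\tilde I'=\emptyset$) and $t\le -1$. What your version buys is a mechanical, case-light verification that makes transparent exactly where both hypotheses enter ($|I|\ne|I'|$ forces a single regime; $t=0$ would give $-1$, consistent with Lemma~\ref{lm:43}); what the paper's version buys is that the same positional vocabulary is reused verbatim in Lemmas~\ref{lm:42} and~\ref{lm:43}, whereas your closed forms would have to be re-deployed there in the mixed regime.
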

   \begin{proof}
Assume that $|I|>|I'|$. Then $|I|>|\tilde I|\ge|I'|>|\tilde I'|$. Consider possible cases.
  \medskip

\noindent\emph{Case 1}:  $r\le r'$ and $i'<i$. Then $I',\tilde I'\subseteq I,\tilde I$. Therefore, both $\alpha$ and $\beta$ are zero everywhere, implying $\varphi=0$.
\medskip

\noindent\emph{Case 2}: $I\cap I'=\emptyset$. If $i'< r$, then $\beta$ is zero. Also  $ \alpha(I,I')=|I'|=\alpha(\tilde I,I')$ and $\alpha(I,\tilde I')=|\tilde I'|=\alpha(\tilde I,\tilde I')$.
    
And if $i< r'$, then $\alpha$ is zero. Also $\beta(I,I')=|I'|=\beta(\tilde I,I')$ and $\beta(I,\tilde I')=|\tilde I'|=\beta(\tilde I,\tilde I')$.
So in both situations, $\varphi=0$.
  \medskip

\noindent\emph{Case 3}: $r'<r\le i'<i$. Then $\beta$ is zero. Also $\alpha(P,P')=r-r'$ holds for all $P\in\{I,\tilde I)$ and $P'\in\{I',\tilde I'\}$, implying $\varphi=0$.
  \medskip

\noindent\emph{Case 4}: $r<r'\le i<i'$. Then $\alpha$ is zero, and
  $$
  \beta(I,I')=i'-i=\beta(\tilde I,\tilde I'), \quad \beta(I,\tilde I')=i'-1-i \quad \mbox{and}\quad \beta(\tilde I,I')=i'-(i-1),
  $$
again implying $\varphi=0$.

When $|I|<|I'|$, the argument follows by swapping $I,\tilde I$ by $I',\tilde I'$.
  \end{proof}

 \begin{lemma} \label{lm:42} Let  $|I|=|I'|$. {\rm(a)} If $i<i'$ then $\varphi=1$. {\rm(b)}  If $i>i'$ then $\varphi=-1$. {\rm(c)}  If $i=i'$ then $\varphi=0$.  
   \end{lemma}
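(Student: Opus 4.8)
The quantity $\varphi$ is built from the four ``shift numbers'' $\beta(P,P')-\alpha(P,P')$ for $P\in\{I,\tilde I\}$ and $P'\in\{I',\tilde I'\}$, with $\tilde I=I-i$ and $\tilde I'=I'-i'$. Since $I,I',\tilde I,\tilde I'$ are all intervals with $r:=\min(I)=\min(\tilde I)$, $r':=\min(I')=\min(\tilde I')$, and with $|I|=|I'|=:k$ now, the intervals are $I=[r..i]$, $\tilde I=[r..i-1]$, $I'=[r'..i']$, $\tilde I'=[r'..i'-1]$, and $i=r+k-1$, $i'=r'+k-1$, so $i<i'$, $i=i'$, $i>i'$ correspond exactly to $r<r'$, $r=r'$, $r>r'$. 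The idea is simply to evaluate each of the four terms $\beta(P,P')-\alpha(P,P')$ directly from the definition~\refeq{PPQQ} and take the alternating sum. In case (c), $I=I'$ and $\tilde I=\tilde I'$, so $\beta-\alpha$ vanishes on the diagonal pairs $(I,I')$ and $(\tilde I,\tilde I')$; for the off-diagonal pairs $(I,\tilde I')$ and $(\tilde I,I')$ one has $I=[r..i]$ against $[r..i-1]$ in one order and the reverse in the other, and the two contributions cancel — giving $\varphi=0$.

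**Cases (a) and (b).** For (a), $r<r'$, so $i<i'$ and all four intervals start strictly left of where $I'$ (and $\tilde I'$) start is false — rather $\min(I)=\min(\tilde I)=r<r'=\min(I')=\min(\tilde I')$, so in every pair $(P,P')$ the element-count ``$p'<\min(P)$'' is $0$, hence $\alpha(P,P')=0$ throughout, and $\varphi$ reduces to the alternating sum of the $\beta$'s. Now $\beta(P,P')=\min\{|\{p'\in P'\colon p'>\max(P)\}|,\ |\{p\in P\colon p<\min(P')\}|\}$. Here $|\{p\in P\colon p<r'\}|=r'-r$ for $P=I$ and also $r'-r$ for $P=\tilde I$ (both start at $r$ and $r'-1\ge r$ forces this count to be $r'-r$ once $r'-1$ lies in the interval, which it does since $i,i-1\ge r'-1$ as $|I|=|I'|$ and $r<r'$). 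On the other side, $|\{p'\in P'\colon p'>\max(P)\}|$ equals $i'-i$ for $(I,I')$, equals $i'-(i-1)$ for $(\tilde I,I')$, equals $(i'-1)-i$ for $(I,\tilde I')$, and $(i'-1)-(i-1)=i'-i$ for $(\tilde I,\tilde I')$. Since $i'-i=r'-r$, the minimum is attained by the first argument in the first and last terms and the comparison in the two middle terms must be checked (but $i'-(i-1)=r'-r+1>r'-r$ and $(i'-1)-i=r'-r-1<r'-r$), so $\beta(I,I')=\beta(\tilde I,\tilde I')=r'-r$, $\beta(\tilde I,I')=r'-r$, $\beta(I,\tilde I')=r'-r-1$; the alternating sum $\beta(I,I')-\beta(I,\tilde I')-\beta(\tilde I,I')+\beta(\tilde I,\tilde I')=(r'-r)-(r'-r-1)-(r'-r)+(r'-r)=1$, i.e.\ $\varphi=1$. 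Case (b) is symmetric with the roles of $I$ and $I'$ interchanged (equivalently $r>r'$ forces $\beta\equiv0$ and one computes the $\alpha$'s the same way), yielding $\varphi=-1$.

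**The main obstacle.** The only genuinely delicate point is bookkeeping: one must make sure, in each pair $(P,P')$, which of the two quantities inside the $\min$ in~\refeq{PPQQ} is the smaller, since the ``$\pm1$'' shifts produced by deleting the top element $i$ or $i'$ change one argument of the $\min$ but not the other, and the answer $\varphi=\pm1$ is precisely the residue of these shifts surviving the fourfold alternating sum. A clean way to organize this, and the route I would take in writing it up, is to note that $|I|=|I'|$ forces the ``overhang on the far side'' and the ``overhang counted inside $P$'' to coincide for the diagonal pairs, so that the $\min$ is unambiguous there, and then to treat the two off-diagonal pairs together, observing that $\beta(\tilde I,I')$ and $\beta(I,\tilde I')$ differ from the common diagonal value by $+$ (or saturate) and $-$ respectively; the same reasoning with $\alpha$ in place of $\beta$ handles case (b). Once the four values are in hand the alternating sum is immediate. (Case (c) needs no such care, as the two nonzero terms are literal transposes of each other and cancel.)
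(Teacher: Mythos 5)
Your overall route is the same as the paper's: fix the relative order of $r=\min(I)$ and $r'=\min(I')$, note that one of $\alpha,\beta$ then vanishes identically, and evaluate the four surviving terms of \refeq{PPQQ} explicitly before taking the alternating sum. Case (c) and the symmetry reducing one of (a),(b) to the other are fine.

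There is, however, a genuine gap in your evaluation of the $\beta$'s in case (a): you assert that $|\{p\in P\colon p<r'\}|=r'-r$ for $P=I$ and $P=\tilde I$, justified by the claim that $i,i-1\ge r'-1$ ``as $|I|=|I'|$ and $r<r'$''. That implication is false: those hypotheses give $i<i'$, not $i\ge r'-1$, and $I$ and $I'$ can be disjoint. For example, with $m,n\ge 5$ take $(i,j)=(2,5)$ and $(i',j')=(5,2)$, so that $I=[1..2]$, $I'=[4..5]$, $|I|=|I'|=2$ and $r'-r=3$; then $|\{p\in I\colon p<r'\}|=2$ and $|\{p\in\tilde I\colon p<r'\}|=1$, not $3$. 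In general each count saturates at the length of the interval, so your table of values $\beta(I,I')=\beta(\tilde I,I')=\beta(\tilde I,\tilde I')=r'-r$ and $\beta(I,\tilde I')=r'-r-1$ is wrong whenever $r'-r\ge|I|$; the paper avoids this by splitting into the subcases $I\cap I'=\emptyset$ and $I\cap I'\ne\emptyset$ and computing the saturated values separately. The lemma itself is safe: setting $s:=r'-r$ and $k:=|I|$, the correct values are $\beta(I,I')=\min\{s,k\}$, ~$\beta(I,\tilde I')=\min\{s,k\}-1$, ~$\beta(\tilde I,I')=\beta(\tilde I,\tilde I')=\min\{s,k-1\}$, whose alternating sum is again $1$. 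But as written your argument covers only the overlapping configuration, and the sentence used to exclude the disjoint one does not hold.
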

   \begin{proof}
~We have $|I'|,|\tilde I'|\le |I|$ and $|\tilde I'|=|\tilde I|$ but $|I'|=|\tilde I|+1$. Let $i>i'$. Then, using~\refeq{PPQQ}), one can check that $\beta$ is zero. Also if $I\cap I'=\emptyset$, then 
  $$
  \alpha(I,I')=|I|,\quad \alpha(I,\tilde I')=|\tilde I'|=\alpha(\tilde I,\tilde I'), \quad \alpha(\tilde I,I')=|\tilde I|=|I|-1.
    $$ 
And if $I\cap I'\ne\emptyset$, then
  $$
  \alpha(I,I')=\alpha(I,\tilde I')=\alpha(\tilde I,\tilde I')=r-r'=i-i'\quad \mbox{and} \quad \alpha(\tilde I,I')=|\tilde I-I'|=(i-1)-i'.
    $$ 

Therefore, in both situations
  $$
  \varphi=-\alpha(I,I')+\alpha(I,\tilde I')+\alpha(\tilde I,I')-\alpha(\tilde I,\tilde I')=
    \alpha(\tilde I,I')-\alpha(I,I')=-1,
  $$ 
as required in~(b).

Case (a) reduces to (b). And if $i=i'$ then $r=r'$, implying that both $\alpha, \beta$ are zero (since for any two intervals among $I,\tilde I,I',\tilde I'$, one is included in the other).
 \end{proof}
 
 \begin{lemma} \label{lm:43}
 Let $i=i'$. {\rm(a)} If $|I|>|I'|$ then $\varphi=-1$. {\rm(b)} If $|I|<|I'|$ then $\varphi=1$.
  \end{lemma}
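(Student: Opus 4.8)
The plan is to exploit that, since $i=i'$, the intervals in play have coinciding right endpoints in pairs, so they are totally nested and almost every term of $\varphi$ drops out; this is the same mechanism already used in Lemmas~\ref{lm:41} and~\ref{lm:42}. Write $r:=\min(I)$ and $r':=\min(I')$, so that $I=[r..i]$, $\tilde I=[r..i-1]$, $I'=[r'..i]$, $\tilde I'=[r'..i-1]$ (with the convention $\tilde I=\emptyset$ when $|I|=1$, and likewise for $\tilde I'$). Because $|I|\ne|I'|$ and $i=i'$, exactly one of $I'\subsetneq I$ and $I\subsetneq I'$ holds, according to the sign of $r'-r$, which splits the argument into cases (a) and (b).

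In case (a), $|I|>|I'|$, i.e.\ $r<r'$, I would note that $I'\subseteq I$, $\tilde I'\subseteq I$ and $\tilde I'\subseteq\tilde I$, so on each of the three pairs $(I,I')$, $(I,\tilde I')$, $(\tilde I,\tilde I')$ both counts in~\refeq{PPQQ} vanish and hence $\alpha=\beta=0$ there. The only surviving pair is $(\tilde I,I')$: here $\min(I')=r'>r=\min(\tilde I)$ forces $\alpha(\tilde I,I')=0$, while $\beta(\tilde I,I')=\min\{\,|\{p'\in I'\colon p'>i-1\}|,\ |\{p\in\tilde I\colon p<r'\}|\,\}=\min\{1,\,r'-r\}=1$. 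Substituting into the defining expression for $\varphi$ gives $\varphi=-\bigl(\beta(\tilde I,I')-\alpha(\tilde I,I')\bigr)=-1$. In case (b), $|I|<|I'|$, i.e.\ $r>r'$, the roles are mirrored: now $I\subseteq I'$, $\tilde I\subseteq I'$ and $\tilde I\subseteq\tilde I'$, so $\alpha=\beta=0$ on $(I,I')$, $(\tilde I,I')$, $(\tilde I,\tilde I')$, and the one surviving pair is $(I,\tilde I')$, for which $\beta(I,\tilde I')=0$ (since $\max(\tilde I')=i-1<i$) and $\alpha(I,\tilde I')=\min\{\,|\{p'\in\tilde I'\colon p'<r\}|,\ |\{p\in I\colon p>i-1\}|\,\}=\min\{r-r',\,1\}=1$. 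Since the $(I,\tilde I')$ term enters $\varphi$ with a minus sign, $\varphi=-\bigl(\beta(I,\tilde I')-\alpha(I,\tilde I')\bigr)=-(0-1)=1$.

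The degenerate subcase $|I|=1$ can occur only in (b), as case (a) forces $|I|\ge 2$; then $\tilde I=\emptyset$, all $\tilde I$-terms of $\varphi$ vanish automatically from their zero counts, and one is left with the same single term $(I,\tilde I')$, so nothing changes. The only real work is the bookkeeping of the eight $\alpha/\beta$ values against the nesting pattern, and the one genuinely load-bearing point is that $|I|\ne|I'|$, i.e.\ $r\ne r'$: this is exactly what turns $\min\{1,|r-r'|\}$ into $1$ rather than $0$, hence what makes $\varphi=\mp1$ rather than $0$. Combined with Lemmas~\ref{lm:41} and~\ref{lm:42}, this exhausts the possible configurations of $(i,j),(i',j')$; the parallel computation of $\psi$ (replace $I,\tilde I,I',\tilde I',i,i'$ by $J,\tilde J,J',\tilde J',j,j'$) then feeds into $d=\varphi+\psi$ in~\refeq{ccccd}, yielding~\refeq{wcast}.
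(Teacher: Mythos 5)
Your proof is correct and follows essentially the same route as the paper's: observe that the nesting of intervals forces $\alpha=\beta=0$ on three of the four pairs, then evaluate the single surviving term ($(\tilde I,I')$ in case (a), $(I,\tilde I')$ in case (b)) via \refeq{PPQQ}. The only difference is cosmetic — the paper writes out case (a) and disposes of (b) by symmetry, whereas you compute both explicitly and also note the harmless degenerate subcase $|I|=1$.
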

 \begin{proof}
~Let $|I|>|I'|$. Then $I',\tilde I\subset I$ and $\tilde I'\subset \tilde I$. Hence $\alpha$ and $\beta$ are zero on each of $(I|I'),(I|\tilde I'),(\tilde I|\tilde I')$. Also $|\tilde I|\ge |I'|$ and $r<r'$ imply $\alpha(\tilde I,I')=0$ and $\beta(\tilde I,I')=i'-(i-1)=1$ (since $\max(\tilde I)=i-1$). This gives $\varphi=-\beta(\tilde I,I')=-1$. 

Case~(b) reduces to~(a). 
 \end{proof}

Replacing   $i,i'$ by $j,j'$, and $I,I'$ by $J,J'$ in Lemmas~\ref{lm:41}--\ref{lm:43}, we obtain the corresponding statements concerning $\psi$.
  \begin{numitem1} \label{eq:psi}
  \begin{itemize}
\item[(i)] If $|J|=|J'|$ and $j<j'$, or if $|J|<|J'|$ and $j=j'$, then $\psi=1$.
\item[(ii)] Symmetrically, if $|J|=|J'|$ and $j>j'$, or if $|J|>|J'|$ and $j=j'$, then $\psi=-1$.
\item[(iii)] Otherwise $\psi=0$.
  \end{itemize}
  \end{numitem1}
  
Now we finish the proof with showing~\refeq{wcast} in the corresponding three cases.
  \medskip
  
\noindent\emph{Case A}: $i=i'$ and $j<j'$. First suppose that $i\le j$. Then both $(I|J)$ and $(I'|J')$ are co-flag corteges, and $|I|=|I'|=i$. We have $\varphi=0$ (by Lemma~\ref{lm:42}(c)) and $\psi=1$ (by~\refeq{psi}(i)).

Next suppose that $j<i<j'$. Then $(I|J)$ is flag, $(I'|J')$ is co-flag, and $|I|=j<i=|I'|$. This gives $\varphi=1$ (by Lemma~\ref{lm:43}(b)) and $\psi=0$ (by~\refeq{psi}(iii)).

Finally, suppose that $j'\le i$. Then both $(I|J),(I'|J')$ are flag, and $|I|=j<j'=|I'|$. This gives $\varphi=1$ (by Lemma~\ref{lm:43}(b)) and $\psi=0$ (by~\refeq{psi}(iii)).

Thus, in all situations, $d=\varphi+\psi=1$, as required in~\refeq{wcast}(i).
   \medskip
  
\noindent\emph{Case B}: $i<i'$ and $j=j'$. This is symmetric to the previous case, yielding $d=1$. This matches assertion~(ii) in~\refeq{wcast} (since replacing $i<i'$ by $i>i'$ changes $d=1$ to $d=-1$).
   \medskip
  
\noindent\emph{Case C}: $i\ne i$ and $j\ne j'$. When $\varphi=\psi=0$, \refeq{wcast}(iii) is immediate. The situation with $\varphi\ne 0$ arises only when $|I|=|I'|$; then (a) $i<i'$ implies $\varphi=1$, and (b) $i>i'$ does $\varphi=-1$ (see Lemma~\ref{lm:42}). Similarly, $\psi\ne 0$ happens only if $|J|=|J'|$; then (c) $j<j'$ implies $\psi=1$, and (d) $j>j'$ does $\psi=-1$ (by~\refeq{psi}(i),(ii))

In subcase~(a), $i<i'$ and $|I|=|I'|=:k$ imply $i'>k$ (in view of $i\ge |I|$). Therefore, $j'=k$ must hold (i.e., $(I'|J')$ is flag). Then $j\ne j'$ implies $j>j'$, and we obtain $\psi=-1$, by~\refeq{psi}(ii).

In subcase~(b), $i>i'$ and $|I|=|I'|=:k$ imply $i>k$. Therefore, $j=k$. Then $j'>j$, yielding $\psi=1$, by~\refeq{psi}(i). 

So in both (a) and (b), we obtain $\varphi+\psi=0$. In their turn, subcases~(c) and~(d) are symmetric to~(a) and~(b), respectively. Thus, in all situations, $d=0$ takes place, as required in~\refeq{wcast}(iii).
 \medskip
  
This completes the proof of Theorem~\ref{tm:B}.


\section{Uniqueness} \label{sec:unique}

Let $f_0:Pint_{m,n}\to\Ascr^\ast$ be a function in the hypotheses of Theorem~\ref{tm:B}, i.e., $f_0$ satisfies quasi-commutation relations for all pairs of pressed interval corteges in $\Escr^{m,n}$ (cf.~\refeq{f0qc}). A priori, $f_0$ may have many extensions to $\Escr^{m,n}$ that are QI-functions. One of them is the function $f$ whose values $f(I|J)$ are $q$-minors $\Delta(I|J)$ of the corresponding path matrix constructed in the proof  in Sect.~\SEC{proofB}. 

In light of Theorems~\ref{tm:A} and~\ref{tm:B}, it is tempting to ask when $f_0$ has a unique QI-extension. Since any QI-extension is an RQI-function (i.e., satisfies the corresponding relations of Pl\H{u}cker, co-Pl\H{u}cker and Dodgson types) and in view of Theorem~\ref{tm:A}, we may address an equivalent question: when an RQI-extension $g$ of $f_0$ is a QI-function (and therefore $g=f$). We give sufficient conditions below (which is, in fact, a corollary of Theorems~\ref{tm:A} and~\ref{tm:B}).

To this aim, let us associate to each $(I|J)\in Pint_{m,n}$ an indeterminate $y_{I|J}$ and form the $\mathbb K$-algebra $\Lscr_Y$ of quantized Laurent polynomials generated by these $y_{I|J}$ (where the quantization is agreeable with that for $f_0$). The values of $f_0$ are said to be \emph{algebraically independent} if the map $y_{I|J}\mapsto f_0(I|J)$, $(I|J)\in Pint_{m,n}$, gives an isomorphism between $\Lscr_Y$ and the $\mathbb K$-subalgebra $\Ascr^{f_0}$ of $\Ascr$ generated by these values.
  \begin{coro} \label{cor:unique}
Let $f_0$ and $f$ be as above. Let the following additional conditions hold:
 
{\rm(i)} the values of $f_0$ are algebraically independent;

{\rm(ii)} if an element $a\in\Ascr^{f_0}$ is a zerodivisor in $\Ascr$, then $a$ is a zerodivisor in $\Ascr^{f_0}$.

Suppose that $g$ is an RQI-function on $\Escr^{m,n}$ coinciding with $f_0$ on $Pint_{m,n}$. Then $g$ is a QI-function (and therefore $g=f$).
  \end{coro}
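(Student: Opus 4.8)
The plan is to deduce Corollary~\ref{cor:unique} from Theorems~\ref{tm:A} and~\ref{tm:B} by transporting the situation into the ``free'' algebra $\Lscr_Y$. First I would invoke Theorem~\ref{tm:B} to obtain the concrete QI-extension $f$ of $f_0$ built from the path matrix $\Path_G$; all of its values lie in the subalgebra $\Ascr^{f_0}$ generated by the values of $f_0$, since each $f(I|J)=\Delta(I|J)_{\Path_G}$ is a Laurent polynomial in the weights $w(i,j)$, and each such weight is a ratio of two values of $f_0$ by~\refeq{wij2}. So both $f$ and the given RQI-function $g$ take values in $\Ascr^{f_0}$ (for $g$ this requires a short argument: $g$ agrees with $f_0$ on $Pint$, and Cases~1--3 of the proof of Theorem~\ref{tm:A}, run with $g$ in place of $f$ and $f$ in the role of the reference QI-function, express every $g(I|J)$ through values on corteges of smaller $\sigma$ or $\eta$, so by induction $g(I|J)\in\Ascr^{f_0}$ — here one uses that $f(Aj|B)$, etc., are \emph{invertible} in $\Ascr^{f_0}$, being monomials in the $w(i,j)$, so the cancellations in that proof can be carried out).

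Next I would use hypothesis~(i) to pull everything back along the isomorphism $\Lscr_Y\xrightarrow{\ \sim\ }\Ascr^{f_0}$, $y_{I|J}\mapsto f_0(I|J)$. Let $\tilde f_0$ be the tautological function $(I|J)\mapsto y_{I|J}$ on $Pint$ valued in $\Lscr_Y^\ast$ (the $y_{I|J}$ are invertible in the Laurent algebra), and note $\tilde f_0$ satisfies the quasi-commutation relations because $f_0$ does and the quantization on $\Lscr_Y$ is chosen to agree with that for $f_0$. Theorem~\ref{tm:B} applied to $\tilde f_0$ yields a QI-function $\tilde f$ on $\Escr^{m,n}$ valued in $\Lscr_Y$; composing with the isomorphism gives back $f$. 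Likewise the given RQI-function $g$, whose values lie in $\Ascr^{f_0}$, pulls back to an RQI-function $\tilde g$ on $\Escr^{m,n}$ valued in $\Lscr_Y$ that agrees with $\tilde f_0$ on $Pint$; composing $\tilde g$ with the isomorphism returns $g$. Thus it suffices to prove $\tilde g=\tilde f$ inside $\Lscr_Y$.

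For that I would apply Theorem~\ref{tm:A} with the pair $(\tilde f,\tilde g)$: both are RQI-functions, both coincide on $Pint_{m,n}$ (they equal $\tilde f_0$ there), both are nowhere zero, and — crucially — every value $\tilde f(I|J)$ is a non-zerodivisor in $\Lscr_Y$, because $\Lscr_Y$ is an iterated quantized Laurent polynomial algebra (an Ore domain), hence has no zerodivisors at all. Theorem~\ref{tm:A} then gives $\tilde f=\tilde g$ on all of $\Escr^{m,n}$, and applying the isomorphism yields $f=g$; in particular $g$ is a QI-function, since $f$ is. Finally I would observe where hypothesis~(ii) enters: it is needed precisely to justify, back in $\Ascr$ rather than $\Ascr^{f_0}$, that the intermediate values appearing in running the proof of Theorem~\ref{tm:A} for $g$ (to show $g$ lands in $\Ascr^{f_0}$ and that the relevant cancellations are legitimate) are non-zerodivisors — a value of $f$ that is a non-zerodivisor in $\Ascr^{f_0}$ would be useless if it became a zerodivisor in the bigger ring, and~(ii) rules this out in the contrapositive form actually used.

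The main obstacle I anticipate is the bookkeeping in the first paragraph: verifying carefully that $g$ takes values in $\Ascr^{f_0}$ and that all divisions performed in the Theorem~\ref{tm:A} argument stay inside $\Ascr^{f_0}$ and remain valid in $\Ascr$. Once that is pinned down, the reduction to a domain where Theorem~\ref{tm:A} applies verbatim is routine. One should also double-check that the quantization parameters (the exponents $c$ in the quasi-commutation relations among the $y_{I|J}$) are forced to match those among the $f_0(I|J)$ — but this is exactly the content of ``the quantization is agreeable with that for $f_0$'' in the definition of $\Lscr_Y$, together with~\refeq{f0qc}, so no new input is required.
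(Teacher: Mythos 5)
Your overall plan correctly identifies the two theorems and the role of the isomorphism $\Lscr_Y\cong\Ascr^{f_0}$, but the step on which your whole transport to $\Lscr_Y$ rests is wrong. You claim that $g$ takes values in $\Ascr^{f_0}$ because the left factors $f(Aj|B)$, $f(Ai|Bj)$, etc.\ occurring in the Pl\H{u}cker/Dodgson recursions are ``invertible in $\Ascr^{f_0}$, being monomials in the $w(i,j)$''. That is false: a $q$-minor $\Delta(I|J)_{\Path_G}$ is a single Laurent monomial in the generators only when $(I|J)$ is a pressed interval cortege (the unique-flow situation~\refeq{piflow}); for the corteges $(Aj|B)$ arising in Case~1 of the proof of Theorem~\ref{tm:A} it is in general a sum of many distinct monomials (already $\Path(2|2)$ for $m=n=2$ is such) and is not invertible in $\Ascr^{f_0}$. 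Without that invertibility you cannot ``express'' $g(I|J)$ from the relation $g(Aj|B)g(I|J)=\cdots$, so the containment $g(\Escr^{m,n})\subseteq\Ascr^{f_0}$ --- which you need in order to pull $g$ back to an RQI-function on $\Lscr_Y$ --- is not established, and your application of Theorem~\ref{tm:A} inside $\Lscr_Y$ has no input.

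The repair is to abandon the transport of $g$ altogether and argue in $\Ascr$, which is what the paper does: under hypothesis~(i) each $f(I|J)$ corresponds to a nonzero element of the Laurent algebra $\Lscr_Y$, which has no zerodivisors, so $f(I|J)$ is a nonzero non-zerodivisor of $\Ascr^{f_0}$; the contrapositive of hypothesis~(ii) then makes it a non-zerodivisor in all of $\Ascr$. Theorem~\ref{tm:A} now applies directly to the pair $(f,g)$ in $\Ascr$ and gives $g=f$ (whence $g(\Escr^{m,n})\subseteq\Ascr^{f_0}$ a posteriori). You do name this mechanism in your last paragraph, but you treat it as an auxiliary check supporting the flawed containment argument rather than as the proof itself; once the non-zerodivisor property in $\Ascr$ is in hand, the detour through $\Lscr_Y$ for $g$ is unnecessary and, as written, unjustified. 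Note also that the non-vanishing of $f(I|J)$ as an element of $\Lscr_Y$ deserves a word of justification (distinct flows contribute distinct monomials), since Theorem~\ref{tm:A} requires $f$ to avoid $0$.
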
 
  
  \begin{proof}
(a sketch)~Considering the construction of $q$-minors of the path matrix related to $f_0$ (cf.~\refeq{Mat},\refeq{Lind},\refeq{wphiij}--\refeq{wij2}), one can deduce that for each cortege $(I|J)\in\Escr^{m,n}$, ~$y_{I|J}$ is a nonzero polynomial in $\Lscr_Y$. Then condition~(i) implies that $f(I|J)$ is a nonzero element of $\Ascr^{f_0}$. Furthermore, since $\Lscr_Y$ is free of zerodivisors (by a known fact on Laurent polynomials; see, e.g.~\cite{bourb}, ch.~II, \$11.4, Prop.~8), so is $\Ascr^{f_0}$. Therefore, by condition~(ii), $f(I|J)$ is not a zerodivisor in $\Ascr$. Now applying Theorem~\ref{tm:A}, we obtain $g=f$, as required.
\end{proof}


\begin{thebibliography}{99}

\small
 %
\bibitem{bourb} N.~Bourbaki, Algebre,  Hermann, Paris, 2007.
 %
\bibitem{cast} K.~Casteels, A graph theoretic method for determining
generating sets of prime ideals in quantum matrices, \textsl{J.~Algebra}
\textbf{330} (2011) 188--205.
 %
Number 
 %
\bibitem{cauch} G.~Cauchon, Spectre premier de $O_q(M_n(k))$: image canonique
et s\'eparation normale, \textsl{J.~Algebra} \textbf{260} (2) (2003) 519--569.
 %
\bibitem{DK} V.~Danilov and A.~Karzanov, On universal quadratic identities for minors of quantum matrices, \textsl{J.~Algebra} \textbf{488} (2017) 145--200.
 %
\bibitem{DKK} V.~Danilov, A.~Karzanov, and G.~Koshevoy, Planar flows and
quadratic relations over semirings, \textsl{J. Algebraic Combin.} \textbf{36}
(2012) 441--474.
 %
  \Xcomment{
\bibitem{fio} R. Fioresi, Commutation relations among quantum minors in
$O_q(M_n(k))$,
\textsl{J. Algebra}  \textbf{280} (2) (2004) 655--682.
 %
\bibitem{KL} D.~Krob and B.~Leclerc, Minor identities for quasi-determinants
and quantum determinants, \textsl{Commun. Math. Phys.} \textbf{169} (1995)
1--23.
 }
  %
\bibitem{LR} V.~Lakshmibai and N. Reshetikhin, Quantum flag and Schubert
schemes,
\textsl{Contemp. Math.} \textbf{134} (1992) 145--181.
 %
\bibitem{LZ} B.~Leclerc and A.~Zelevinsky, Quasicommuting families of
quantum Pl\H{u}cker coordinates, \textsl{Amer. Math. Soc. Trans., Ser.~2} ~{\bf
181} (1998) 85--108.
 %
\bibitem{lind} B. Lindstr\H{o}m, On the vector representations of induced
matroids,
\textsl{Bull. London Math. Soc.} \textbf{5} (1973) 85--90.
  %
\bibitem{man} Yu.I.~Manin, Quantum groups and non-commutative geometry,
Vol.~\textbf{49}, Centre de Recherches Mathematiques Montreal, 1988.
   %
\bibitem{scott} J.S.~Scott, Quasi-commuting families of quantum minors, \textsl{J. Algebra}
\textbf{290} (1) (2005) 204--220.
 %
\bibitem{TT} E.~Taft and J.~Towber, Quantum deformation of flag schemes and
Grassmann schemes. I. q-deformation of the shape-algebra for GL(n), \textsl{J.
Algebra} \textbf{142} (1991) 1--36.


\end{thebibliography}
\end{document}